\documentclass[12pt,twoside,reqno]{amsart}
\usepackage{amsfonts, amsthm, amsmath, amssymb}
\usepackage{hyperref}
\hypersetup{colorlinks=false}

\usepackage[margin=1.5in]{geometry}

\RequirePackage{mathrsfs} \let\mathcal\mathscr
\numberwithin{equation}{section}

    \renewcommand{\phi}{\varphi}
    \renewcommand{\rho}{\varrho}

    \renewcommand{\leq}{\leqslant}
    
    \renewcommand{\geq}{\geqslant}
    \renewcommand{\ge}{\geqslant}

    \newcommand{\bea}{\begin{eqnarray}}
    \newcommand{\eea}{\end{eqnarray}}
    \newcommand{\bna}{\begin{eqnarray*}}
    \newcommand{\ena}{\end{eqnarray*}}

    \newtheorem{thm}{Theorem}[section]
    
    \newtheorem{lem}[thm]{Lemma}

    \theoremstyle{definition}

    \numberwithin{equation}{section}
\allowdisplaybreaks[3]

\begin{document}
\title
[{ On Waring--Goldbach problem with mixed powers}]
{ On Waring--Goldbach problem with mixed powers  }
\author
[Huimin Wang ]
{Huimin Wang }
\address{(Huimin Wang)}
\email{huimin97@126.com}
\address{ School of Mathematical Sciences,
Key Laboratory of Intelligent Computing and Applications (Ministry of Education), Tongji University, Shanghai 200092, P. R. China}

\author
[Shuangrui Tian*]
{Shuangrui Tian*}
\address{(Shuangrui Tian)}
\email{tianshuangrui@tongji.edu.cn}
\address{ School of Mathematical Sciences,
Key Laboratory of Intelligent Computing and Applications (Ministry of Education), Tongji University, Shanghai 200092, P. R. China}

\thanks{{\em Key words and phrases:} Waring--Goldbach problem, circle method, additive representations}
\thanks{2010  {\em Mathematics Subject Classification:} 11P32, 11P55}
\thanks{* Corresponding author.}

\begin{abstract}

It is proved that for every sufficiently large odd integer $n$, the equation
$$ n=p_1^2+p_2^2+p_3^3+p_4^4+p_5^5+p_6^6+p_7^8 $$
is solvable in primes. This refines the work of Cai and Mu in 2015.
\end{abstract}
\maketitle
\section{ Introduction}
The philosophy of the Hardy--Littlewood circle method suggests that one may
expect to solve equations of the form
\begin{equation}\label{eq:1.1}
x_1^{k_1}+x_2^{k_2}+\cdots+x_s^{k_s}=n
\end{equation}
in natural numbers $x_j\ (1\leq j\leq s)$, where the exponents
$k_j\ (1\leq j\leq s)$ are fixed positive integers satisfying
\[
k_1^{-1}+\cdots+k_s^{-1}>1,
\]
provided that $n$ is sufficiently large and satisfies the necessary
congruence conditions. One line of research in this direction concerns
representations involving two squares together with several additional
higher powers.

Hooley~\cite{Hooley} introduced divisor sum techniques into the investigation
of \eqref{eq:1.1}. In particular, he proved that for every sufficiently large
positive integer $n$, the equation
\begin{equation*}
x_1^2+x_2^2+x_3^3+x_4^4+x_5^5+x_6^6+x_7^7=n
\end{equation*}
has solutions in non-negative integers.

Moreover, Cai and Mu \cite{CaiMu} refined Hooley's results. Let $P_r$ denote an almost-prime
with at most $r$ prime factors, counted with multiplicity. They showed that the
equations
\begin{align}\label{woai}
w_1^2+p_2^2+p_3^3+p_4^4+p_5^5+p_6^6+p_7^6 &= n
\end{align}
\begin{align}\label{1.2}
y_1^2+p_2^2+p_3^3+p_4^4+p_5^5+p_6^6+p_7^7 &= n,
\end{align}
\begin{align}\label{1.3}
z_1^2+p_2^2+p_3^3+p_4^4+p_5^5+p_6^6+p_7^8 &= n
\end{align}
are solvable with $w_1 \in P_6$, $y_1 \in P_6$, and $z_1 \in P_7$, while all remaining variables are primes.
Recently, Br\"udern \cite{B} introduced new ideas and proved that \eqref{1.2}
has solutions in which all variables are primes. Subsequently, Geovane Matheus Lemes Andrade \cite{G} established that \eqref{woai} also has solutions with all variables restricted to primes.

It is therefore natural to ask whether \eqref{1.3} also admits solutions
with all variables restricted to primes. Compared with \eqref{woai} and
\eqref{1.2}, however, the presence of the eighth power makes the problem
substantially more delicate, since higher powers are generally harder to
treat within the Hardy--Littlewood circle method. In this paper, we
resolve this question by adapting Br\"udern's ideas \cite{B}.

\begin{thm}\label{wang}
 Let $v(n)$ denote the number of representations of the sufficiently large odd integer $n$ in the form
 $$  n=p_1^2+p_2^2+p_3^3+p_4^4+p_5^5+p_6^6+p_7^8.         $$
 Then $v(n)\gg n^\frac{39}{37}/\log^7 n$.
 \end{thm}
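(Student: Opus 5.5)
We follow Brüdern's strategy \cite{B} for \eqref{1.2}: a circle-method argument in which the two squares are handled through a divisor-type (Hooley/Kloosterman) analysis of the binary problem, while the higher powers supply the mean-value and minor-arc savings.

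\textbf{Setup and target.} Let $P_k=n^{1/k}$ and weight each prime by $\log p$. Write
\[
f_k(\alpha)=\sum_{p\le P_k}(\log p)\,e(\alpha p^k),
\]
and define the weighted count
\[
R(n)=\int_0^1 f_2(\alpha)^2f_3(\alpha)f_4(\alpha)f_5(\alpha)f_6(\alpha)f_8(\alpha)e(-\alpha n)\,d\alpha .
\]
The expected main term is $\mathfrak S(n)\,n^{\sigma-1}$, where
\[
\sigma=\tfrac12+\tfrac12+\tfrac13+\tfrac14+\tfrac15+\tfrac16+\tfrac18 .
\]
Since $\sigma-1=1.075>39/37$, a lower bound of the stated size, even after losses, follows once we show $R(n)\gg n^{39/37}$. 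Removing the $\log$ weights then gives the factor $\log^{-7}n$.

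\textbf{Reduction to a binary problem.} Put $m=n-p_3^3-p_4^4-p_5^5-p_6^6-p_7^8$, and let $r(m)$ be the weighted number of representations $m=p_1^2+p_2^2$. Then
\[
R(n)=\sum_{p_3,\dots,p_7} r(m).
\]
For $r(m)$ we use the expected approximation $r(m)\approx \mathfrak S_2(m)\,\pi/4$ (with the usual local factor). This is not known pointwise, so, as in \cite{B}, we replace it by a lower-bound sieve or major-arc approximation $r^*(m)$. The key input is a dispersion estimate: the error $r(m)-r^*(m)$ is small in mean square, or more precisely in correlation, when averaged over $m$ taken from the thin set generated by the five higher powers.

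Concretely, the approach is to approximate $f_2^2$ on arcs. Choose a major-arc set $\mathfrak M$ of level $Q=n^{\delta}$. On $\mathfrak M$, all seven sums are replaced by their standard approximations via Siegel--Walfisz, which gives $\mathfrak S(n)n^{\sigma-1}(1+o(1))$. Here $\mathfrak S(n)\gg1$ for odd $n$, which we check prime by prime: the only obstruction is parity at $p=2$, which forces $n$ odd, and for odd primes one computes the local densities with seven variables.

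\textbf{Minor arcs.} On $\mathfrak m$ we must show
\[
\int_{\mathfrak m}|f_2^2f_3f_4f_5f_6f_8|\,d\alpha=o(n^{\sigma-1}).
\]
We bound $\sup_{\mathfrak m}|f_8|$ and $|f_6|$ by the Kumchev/Harman-type Weyl bounds for prime exponential sums. These give a saving $P_k^{1-\rho_k+\epsilon}$ for $\alpha\in\mathfrak m$ with $Q$ a small power. The remaining mean value is handled by Hölder, combining $\int|f_2|^4\ll P_2^{2+\epsilon}$ with a Hua/Vinogradov-type bound for $\int|f_3f_4f_5|^2$ or similar. The naive exponents fall short, and this is exactly where Brüdern's idea enters. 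We split $f_2^2$ via the identity above, treat the binary problem in $r(m)$ via Kloosterman-refined asymptotics for sums of two squares in arithmetic progressions (the Hooley divisor method), and reduce to bounding
\[
\sum_q\sum_a\Bigl|\sum_m \bigl(\text{error}(m)\bigr)\,\rho(m)\Bigr|,
\]
with $\rho$ the representation function of the five powers. Then Cauchy--Schwarz leaves a mean value $\int|f_3f_4f_5f_6f_8|^2$, restricted to appropriate arcs, times a square-divisor error term. It suffices that this mean value is $\ll P_3P_4P_5P_6P_8\,n^{\theta}$ with $\theta$ small. We obtain it from the major/minor dissection of $f_3f_4f_5$, using the bound $\int_0^1|f_3f_4f_5|^2\ll (P_3P_4P_5)^{1+\epsilon}$, which is plausible since $1/3+1/4+1/5<1$, together with pruning for $f_6,f_8$.

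\textbf{Main obstacle.} The hard step is precisely this last mean value with the eighth power. Replacing $7$ by $8$ lowers the total "mass" of $f_3\cdots f_8$ by $1/56$, so the savings available from $f_6,f_8$ barely compensate the loss from the divisor-problem error. We expect the numerology to close only with a specific choice of $Q$ and Weyl exponent, which presumably yields the exponent $39/37$ instead of $\sigma-1$. We would first try to balance Kumchev's bound for $f_8$ against the pruning lemma over the ranges $q\le n^{\delta}$, optimizing $\delta$.
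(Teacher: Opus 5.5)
Your proposal does not establish the minor-arc bound. The decisive step is left at ``we expect the numerology to close'', and the tools you name would not close it.

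\begin{itemize}
\item \textbf{The binary route fails for primes.} Treating $r(m)$ by Hooley's divisor method rests on $r_2(m)=4\sum_{d\mid m}\chi_{-4}(d)$, which holds only when both square variables run over all integers. For $m=p_1^2+p_2^2$ with $p_1,p_2$ prime there is no such divisor structure, and no Kloosterman-refined asymptotic in progressions. A lower-bound sieve on one square variable returns only an almost-prime, which is why the earlier divisor/sieve approaches stop at $P_6$ or $P_7$.
\item \textbf{An unproved mean value.} The bound $\int_0^1|f_3f_4f_5|^2\,d\alpha\ll(P_3P_4P_5)^{1+\varepsilon}$ is not supplied by the Hua-type lemma, whose hypothesis $\frac14+\frac15\le\frac13$ fails. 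You give no other argument for it.
\item \textbf{The source of $39/37$ is misidentified.} It does not come from a choice of $Q$ and a Weyl exponent. The paper never aims at $n^{43/40}$; Remark~1 attributes the smaller exponent to shortening the $p_5$ range, which is needed to get enough minor-arc saving.
\end{itemize}

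The missing idea is to shorten the fifth-power prime to $p_5\le n^{\gamma/5}$ with $\gamma=265/296$. The target then becomes $n^{\Gamma}$ with $\Gamma=\frac13+\frac14+\frac{\gamma}{5}+\frac16+\frac18=\frac{39}{37}$. This shortening is what makes the mixed means $J_8=\int_0^1|g_2g_8|^2|g_5|^4\,d\alpha\ll P_2^{1+\varepsilon}P_8P_5^{2\gamma}$ and $J_6=\int_0^1|g_2g_6|^2|g_5|^4\,d\alpha\ll P_6^{2+\varepsilon}P_5^{4\gamma}$ provable. One applies Vaughan's iterative lemma (with $k=8$, $j=3$, $\lambda=\gamma$ for $J_8$, and $k=6$, $j=2$ for $J_6$) to $y_1^k-y_2^k=z_1^5+z_2^5-z_3^5-z_4^5$, and then the divisor bound for $x_1^2-x_2^2=N$. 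The value $265/296$ is precisely the largest $\gamma$ for which the third term of Vaughan's bound for $k=8$ is acceptable. The divisor term separately forces $\gamma\le 15/16$.

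The minor arcs are then split by the size of $g_3$.
\begin{itemize}
\item On $\mathfrak n=\{|g_3|\le P_3^{1-\eta}\}$, H\"older in the form $J_6^{1/12}J_8^{1/6}K_1^{5/12}K_2^{1/6}K_3^{1/6}\sup_{\mathfrak n}|g_3|^{1/6}$ is used, with $K_i\ll n^{1+\varepsilon}$ from the Hua-type lemma. This gives $n^{\Gamma-2/8325+\varepsilon}$.
\item Off $\mathfrak n$, Kumchev's cubic bound places $\alpha$ on arcs with $q\le n^{1/25}$. There Kumchev's general bound for $g_2,g_3,g_4,g_6,g_8$ yields $G(\alpha)\ll n^{1+\Gamma}L^{6C+7}V(\alpha)^{5/2}$, which integrates to $n^{\Gamma}L^{-1}$.
\end{itemize}
None of this needs a binary or dispersion analysis.
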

\noindent\textbf{Remark 1.}
The Hardy--Littlewood heuristic suggests that the expected lower bound is
\[
v(n)\gg \frac{n^{2\cdot\frac12+\frac13+\frac14+\frac15+\frac16+\frac18-1}}{\log^7 n}
=\frac{n^{43/40}}{\log^7 n}.
\]
The exponent $39/37$ in Theorem \ref{wang} is smaller because we restrict the fifth-power prime variable to the shortened range $p_5\leq n^{\gamma/5}$, where $\gamma=265/296$. This shortening is needed to obtain a sufficient saving on the minor arcs, ensuring that their contribution is smaller than the major-arc main term.

\par\medskip
\noindent\textbf{Remark 2.}
The eighth-power case presents an additional technical difficulty.
Replacing the seventh-power variable by an eighth-power variable shortens
the corresponding exponential sum from length $n^{1/7}$ to $n^{1/8}$.
Consequently, the mean-value estimates used in the preceding cases do not
directly provide sufficient control of the minor arcs. Thus, the
eighth-power case cannot be obtained as a straightforward extension of the
previous arguments and requires a different treatment of the minor-arc
contribution.

\par\medskip
\noindent\textbf{Remark 3.}
The main innovations of our argument are threefold. First, we restrict the
fifth-power prime variable to a suitably shortened range, which enables us
to balance the major-arc main term against the available minor-arc
estimates. Second, we establish the mixed mean-value estimates $J_8$ and
$J_6$ (see \eqref{J_8}) by applying Vaughan's iterative method to the
corresponding auxiliary Diophantine equations. These estimates provide the
key input for the minor-arc analysis. Third, we divide the minor arcs
according to the size of the cubic exponential sum. On the region where
this sum is small, H\"older's inequality together with the estimates for
$J_8$ and $J_6$ yields a power saving. On the complementary region,
rational approximation and estimates for exponential sums over primes
provide the required logarithmic saving.

\textbf{Notation.}
 As usual, let $\phi(q)$ denote Euler's function and $e(\alpha)=e^{2\pi i \alpha}$. The letter $p$ denotes a prime. We denote by $(a,b)$ the greatest common divisor of $a$ and $b$.
 Whenever $\varepsilon$ appears in a statement,
either implicitly or explicitly, we assert that the statement holds for each
$\varepsilon >0$. Note that the value of $\varepsilon$ may consequently change from statement
to statement. We assume that $n$ is  a large odd positive integer. Write
$  L=\log n.$

\section{Preliminaries}
Let $P_k=n^{1/k}.$ We define the exponential sum
$$
g_k(\alpha)=
\begin{cases}
\displaystyle \sum_{\frac12 P_k<p\leq P_k} e(\alpha p^k)\log p, & k\in\{2,3,4,6,8\},\\[8pt]
\displaystyle \sum_{p\leq P_5^{\gamma}} e(\alpha p^5)\log p, & k=5,
\end{cases}
$$
where $\gamma=265/296$.
Set
$$
G(\alpha)=g_2(\alpha)^2g_3(\alpha)g_4(\alpha)g_5(\alpha)g_6(\alpha)g_8(\alpha).
$$
By the orthogonality of additive characters, we have
\begin{align}\label{2.1}
    v(n)\gg \frac{1}{\log ^7n}\int_0^1 G(\alpha)e(-\alpha n)\,d\alpha.
\end{align}
Write
\begin{align*}
\Gamma=\frac13+\frac14+\frac{\gamma}{5}+\frac16+\frac18=\frac{39}{37}.
\end{align*}

Let $0 \leq a \leq q$ with $(a,q)=1$, and define $Q_1=n^{1/25}$ and $Q_2=n^{24/25}$. Put
\begin{align*}
\mathfrak{M}(q,a)=\left\{\alpha \in [0,1]:\left|\alpha-\frac{a}{q}\right|\leq\frac{L^{B}}{n}\right\},\quad   \mathfrak{M}=\bigcup_{1\leq q\leq L^{B}}\bigcup_{\substack{0\leq a\leq q\\(a,q)=1}}\mathfrak{M}(q,a),
\end{align*}

\begin{align*}
    \mathfrak{M}_2(q,a) = \left\{\alpha \in [0,1]:\ \frac{L^{B}}{n}\leq\left|\alpha-\frac{a}{q}\right|\leq \frac{1}{qQ_2}\right\},\quad \mathfrak{m_2}= \bigcup_{1\leq  q\leq L^{B}}\ \bigcup_{\substack{0\leq a\leq q\\ (a,q)=1}} \mathfrak{M}_2(q,a),
\end{align*}

\begin{align*}
    \mathfrak{M}_3(q,a) = \left\{\alpha \in [0,1]:\ \left|\alpha-\frac{a}{q}\right|\leq \frac{1}{qQ_2}\right\},\quad  \mathfrak{m_3} = \bigcup_{L^{B}<q\leq Q_1}\ \bigcup_{\substack{0\leq a\leq q\\ (a,q)=1}}
\mathfrak{M}_3(q,a),
\end{align*}
$$
 \mathfrak{n}
=
\left\{
\alpha\in[0,1]:
\left|g_3(\alpha)\right|
\leq
P_3^{\,1-\eta}
\right\}(\eta=11/200), \quad  \widetilde{\mathfrak{m}}=\mathfrak{m_2}\bigcup \mathfrak{m_3},
$$
where $B$ is a positive constant  to be chosen later.

\section{The Major arcs }
For $\alpha=\frac{a}{q}+\beta,$  define
$$
S_k(q,a)=\sum_{\substack{x=1\\(x,q)=1}}^{q} e\!\left(\frac{ax^k}{q}\right)\quad(2\leq k\leq 8),$$

$$v_k(\beta)=\frac1k\sum_{2^{-k}n<m\leq n} m^{1/k-1} e(\beta m), \quad (k=2,3,4,6,8),
$$
$$
U(q,a)=S_2(q,a)^2 S_3(q,a)S_4(q,a)S_5(q,a)S_6(q,a)S_8(q,a),
$$
$$
w(\beta)=v_2(\beta)^2 v_3(\beta)v_4(\beta)v_6(\beta)v_8(\beta),
$$
$$
A_n(q)=\sum_{\substack{a=1\\(a,q)=1}}^{q} U(q,a)e(-an/q).
$$

We state some preliminary lemmas which are required in Lemma \ref{Lemma 3.6}.
\begin{lem}\cite[Lemma 4]{Hua}\label{Lemma 3.1}
    Let $p^\theta \mid k,\; p^{\theta+1}\nmid k$, and
    $$l=
\begin{cases}
\theta+2, & p=2,\\
\theta+1, & p\ne 2.
\end{cases}$$
 If $t> l,$   then
    $S_k(p^t,a)=0.$
\end{lem}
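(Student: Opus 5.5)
The plan is to exploit the multiplicative structure of the unit group modulo $p^t$: the summand $e(ax^k/p^t)$ depends only on $x \bmod p^t$. We find a subgroup $H$ of $(\mathbb{Z}/p^t\mathbb{Z})^\times$ that fixes every $k$-th power, so that the sum splits into $|H|$-fold copies of a sum over cosets. Each coset sum then turns out to be a complete additive character sum over a residue class that vanishes.

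Concretely, we take $H=\{1+p^{t-1}y: 0\le y<p\}$. This set consists of units, and it is a subgroup whenever $t\ge 2$, which holds since $t>l\ge 2$. For $h=1+p^{t-1}y$ we expand
$$h^k=1+kp^{t-1}y+\binom{k}{2}p^{2t-2}y^2+\cdots .$$
Since $p^\theta\| k$, we expect $h^k\equiv 1+kp^{t-1}y \pmod{p^{t+\theta}}$, because the higher terms carry at least $p^{2t-2}$, suitably divided by factorials.

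This calculation is not what we need directly. The better approach is to decompose each unit $x$ as $x=u(1+p^{t-\theta-1}z)$ with $z$ running modulo $p^{\theta+1}$; this is the standard Hua argument. Writing $s=t-\theta-1$, the condition $t>l$ gives $s\ge 1$ for odd $p$ and $s\ge 2$ for $p=2$. We then check three things:
\begin{itemize}
\item $H'=\{1+p^{s}z\}$ is a subgroup of order $p^{\theta+1}$, so $x$ ranges over a full set of units as $u$ ranges over coset representatives and $z$ over residues modulo $p^{\theta+1}$.
\item $(1+p^sz)^k\equiv 1+kp^sz \pmod{p^t}$.
\item Consequently $ax^k\equiv au^k+au^k k p^s z \pmod{p^t}$.
\end{itemize}

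The key step is the binomial estimate. For $j\ge2$ we need $v_p\bigl(\binom{k}{j}p^{sj}\bigr)\ge t=s+\theta+1$. Using $v_p\binom{k}{j}\ge \theta - v_p(j)$, this reduces to $s(j-1)\ge 1+v_p(j)$. For odd $p$ with $s\ge1$ this holds because $j-1\ge 1+v_p(j)$ when $j\ge2$; note $p^{v_p(j)}\le j$, and for $p\ge3$ we have $v_p(j)\le j-2$. For $p=2$ with $s\ge2$ we need $2(j-1)\ge 1+v_2(j)$, which is clear. This is exactly where $l=\theta+2$ is needed for $p=2$, and I expect this valuation bookkeeping to be the only real obstacle.

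Given the congruence, we write $k=p^\theta k'$ with $p\nmid k'$. Then $kp^s=k'p^{t-1}$, and the inner sum becomes
$$\sum_{z \bmod p^{\theta+1}} e\!\left(\frac{au^kk'z}{p}\right).$$
Here $p\nmid au^kk'$, so this is $p^\theta$ times a complete sum of a nontrivial character modulo $p$, which equals $0$. Summing over the coset representatives $u$ gives $S_k(p^t,a)=0$.
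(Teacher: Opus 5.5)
Your argument is correct. The paper gives no proof of its own and only cites Hua; your decomposition $x=u(1+p^{s}z)$ with $s=t-\theta-1$, the valuation check $s(j-1)\ge 1+v_p(j)$, and the vanishing of the resulting linear character sum modulo $p$ are the standard argument for this classical fact. It is often written additively as $x=u+p^{s}z$, which amounts to the same computation.

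State the hypothesis $(a,p)=1$ explicitly. Your last step uses $p\nmid au^kk'$, and the lemma is false without it (e.g.\ $a=0$).

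Also delete the abandoned first attempt with $H=\{1+p^{t-1}y\}$. It contains the slip $l\ge 2$: for odd $p$ with $\theta=0$ one has $l=1$, although $t\ge 2$ still holds.
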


\begin{lem}\label{Lemma 3.2}
 \item[(i)]  For $k\geq 2,$ we have
    $$ S_k(q,a)\ll q^{\frac{1}{2}+\varepsilon}. $$
    \item[(ii)]  For $(p,a)=1$, we have
$$
|S_k(p,a)| \leq \big((k,p-1)-1\big) p^{1/2}+1.
$$
\end{lem}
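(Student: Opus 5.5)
Both parts are classical facts about the reduced Gauss-type sums $S_k(q,a)$, and I would prove them separately: (ii) by multiplicative characters modulo a prime, and (i) by multiplicativity together with the prime-power evaluations coming from Lemma \ref{Lemma 3.1}.

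\emph{Part (ii).} Let $d=(k,p-1)$. Since $x\mapsto x^k$ maps $(\mathbb{Z}/p\mathbb{Z})^\times$ onto the subgroup of $d$-th powers, each value being taken exactly $d$ times, we have
$$S_k(p,a)=\sum_{x=1}^{p-1}e\!\left(\frac{ax^d}{p}\right)=\sum_{y=1}^{p-1}e\!\left(\frac{ay}{p}\right)\sum_{\chi^d=\chi_0}\chi(y).$$
The principal character contributes $\sum_{y=1}^{p-1}e(ay/p)=-1$. Each of the $d-1$ non-principal characters $\chi$ with $\chi^d=\chi_0$ contributes a Gauss sum $\bar\chi(a)\tau(\chi)$ with $|\tau(\chi)|=p^{1/2}$. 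The triangle inequality then gives $|S_k(p,a)|\le (d-1)p^{1/2}+1$, which is (ii). If $p\mid k$ so that $d$ need not be defined as above, the same argument applies with $d=(k,p-1)$, because $x\mapsto x^k$ and $x\mapsto x^{d}$ have the same image on $(\mathbb{Z}/p\mathbb{Z})^\times$.

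\emph{Part (i).} Write $q=q_1q_2$ with $(q_1,q_2)=1$. The Chinese remainder theorem gives the standard multiplicativity
$$S_k(q_1q_2,a)=S_k(q_1,aq_2^{k-1})\,S_k(q_2,aq_1^{k-1}),$$
with both new numerators coprime to the respective moduli. So it suffices to bound $S_k(p^t,a)$ and multiply the bounds.

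\begin{itemize}
\item For $t=1$, part (ii) gives $|S_k(p,a)|\le k\,p^{1/2}$.
\item For $t>l$, Lemma \ref{Lemma 3.1} gives $S_k(p^t,a)=0$.
\item For $2\le t\le l$, we have $p^{t}\le p^{\theta+2}\le 4k^{2}$ or so, hence $p\mid 2k$ and $p^t$ is bounded in terms of $k$. The trivial bound then gives $|S_k(p^t,a)|\le p^t\le C(k)\,p^{t/2}$.
\end{itemize}

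Hence $|S_k(q,a)|\le \prod_{p^t\| q} c_k\,p^{t/2}$. The constant exceeding $1$ (namely $k$ or $C(k)$) appears only for primes $p\mid q$, and the factor $C(k)$ only for the finitely many $p\mid 2k$. So the product is at most $C'(k)\,k^{\omega(q)}q^{1/2}\ll q^{1/2+\varepsilon}$, using $k^{\omega(q)}\ll q^{\varepsilon}$.

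The only point needing care is the multiplicativity formula and the bookkeeping showing that the exceptional prime powers ($t\ge2$ but $t\le l$) have bounded size. Everything else is standard, and one could alternatively simply cite Hua's book for (i) and Weil/Gauss-sum theory for (ii).
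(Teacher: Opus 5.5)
Your proof is correct and follows essentially the paper's route: the paper gives no argument of its own, citing Hua for (i) and Vaughan's Lemma 4.3 for (ii), and what you write is the standard proof behind those citations (Gauss sums over the characters with $\chi^{d}=\chi_0$, $d=(k,p-1)$, for (ii); CRT multiplicativity, Lemma \ref{Lemma 3.1}, and a trivial bound on the boundedly many exceptional prime powers for (i)). One wording fix: in the case $2\le t\le l$ the inference runs the other way from how you phrased it---$t\ge 2$ forces $l\ge 2$, hence $\theta\ge 1$ when $p$ is odd, so $p\mid 2k$, and only then does $p^t\le p^{l}\le \max(k^2,4k)$ follow.
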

\begin{proof}
    For ($\mathrm{i}$) and ($\mathrm{ii}$), see \cite[Lemma 5]{Hua} and \cite[Lemma 4.3]{VaughanBook}, respectively.
\end{proof}

\begin{lem}\cite[Lemma 6.2]{VaughanBook}\label{Lemma 3.3}
    Suppose that $|\beta|\leq \frac{1}{2}. $ Then
    $$v_k(\beta)\ll n^{1/k}(1+n|\beta|)^{-1}.  $$
\end{lem}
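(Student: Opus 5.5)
The bound for $v_k(\beta)$ in Lemma \ref{Lemma 3.3} splits into two regimes, and I will prove each separately and then combine them. Throughout, $n$ is large and $k$ is fixed, with $k\in\{2,3,4,6,8\}$.

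\textbf{The trivial bound.} Bounding each term by its absolute value gives
$$|v_k(\beta)|\leq \frac1k\sum_{2^{-k}n<m\leq n} m^{1/k-1}.$$
Comparing this sum with $\int_{2^{-k}n}^{n} t^{1/k-1}\,dt\ll n^{1/k}$ yields $v_k(\beta)\ll n^{1/k}$. This bound is enough whenever $n|\beta|\leq 1$, since then $(1+n|\beta|)^{-1}\geq \tfrac12$.

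\textbf{The case $n|\beta|>1$.} Here I need to show $v_k(\beta)\ll n^{1/k-1}|\beta|^{-1}$, and I will use partial summation. Put $E(t)=\sum_{2^{-k}n<m\leq t} e(\beta m)$. Summing the geometric series gives
$$|E(t)|\ll \|\beta\|^{-1}=|\beta|^{-1},$$
where the equality holds because $|\beta|\leq\frac12$. The weight $f(t)=t^{1/k-1}$ is positive and monotone decreasing on $[2^{-k}n,n]$. Abel summation therefore gives
$$\sum f(m)e(\beta m)=f(n)E(n)-\int_{2^{-k}n}^{n}E(t)f'(t)\,dt.$$
The boundary term is $\ll n^{1/k-1}|\beta|^{-1}$. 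For the integral, we have
$$\int_{2^{-k}n}^{n}|f'(t)|\,dt=f(2^{-k}n)-f(n)\ll n^{1/k-1},$$
so the integral is also $\ll n^{1/k-1}|\beta|^{-1}$. Together these give $v_k(\beta)\ll n^{1/k}(n|\beta|)^{-1}$. Since $n|\beta|>1$, this is $\ll n^{1/k}(1+n|\beta|)^{-1}$.

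\textbf{Combining.} Putting the two cases together proves the lemma. Nothing here is a genuine obstacle. The only care needed is the uniformity of the constant in $|\beta|\leq\frac12$, which is guaranteed by $|1-e(\beta)|\geq 4|\beta|$ in that range. The implied constant depends only on $k$.
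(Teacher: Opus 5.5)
Your argument is correct and is essentially the standard proof of the cited result; the paper gives no proof of its own and simply quotes Lemma 6.2 of Vaughan's book. The trivial bound handles $n|\beta|\leq 1$, and partial summation against the geometric-sum bound $\sum e(\beta m)\ll|\beta|^{-1}$ handles $n|\beta|>1$, since $m^{1/k-1}$ is monotone and $\asymp n^{1/k-1}$ on $(2^{-k}n,n]$; your constant checks ($|1-e(\beta)|\geq 4|\beta|$ for $|\beta|\leq\frac12$, and $1+n|\beta|<2n|\beta|$ when $n|\beta|>1$) are also right.
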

\begin{lem}\label{Lemma 3.4}
   Suppose that $(q_1,q_2)=1.$
   Then $$A_n(q_1q_2)=A_n(q_1)A_n(q_2).$$
\end{lem}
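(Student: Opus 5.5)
The plan is to reduce the claim to multiplicativity of the individual complete sums $S_k$ via the Chinese remainder theorem. Since $(q_1,q_2)=1$, every residue $a$ modulo $q_1q_2$ with $(a,q_1q_2)=1$ can be written uniquely as
$$a\equiv a_1q_2+a_2q_1 \pmod{q_1q_2},$$
where $a_1$ runs over reduced residues modulo $q_1$ and $a_2$ over reduced residues modulo $q_2$. Likewise, every reduced residue $x$ modulo $q_1q_2$ can be written uniquely as $x\equiv x_1q_2+x_2q_1$, with $x_1$ reduced modulo $q_1$ and $x_2$ reduced modulo $q_2$.

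\textbf{Key step.} I will show that for each $2\le k\le 8$,
$$S_k(q_1q_2,a)=S_k(q_1,a_1q_2^{k})\,S_k(q_2,a_2q_1^{k}).$$
Expanding $ax^k$ modulo $q_1q_2$, every mixed term in $(x_1q_2+x_2q_1)^k$ is divisible by $q_1q_2$. What remains is
$$a\bigl(x_1^kq_2^k+x_2^kq_1^k\bigr)\equiv a_1q_2\,x_1^kq_2^k+a_2q_1\,x_2^kq_1^k \pmod{q_1q_2},$$
because the cross terms $a_2q_1\cdot x_1^kq_2^k$ and $a_1q_2\cdot x_2^kq_1^k$ are also divisible by $q_1q_2$. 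Dividing by $q_1q_2$ then gives
$$\frac{ax^k}{q_1q_2}\equiv \frac{a_1q_2^k x_1^k}{q_1}+\frac{a_2q_1^kx_2^k}{q_2}\pmod 1,$$
and the sum factors. In the same way,
$$e\!\left(-\frac{an}{q_1q_2}\right)=e\!\left(-\frac{a_1n}{q_1}\right)e\!\left(-\frac{a_2n}{q_2}\right).$$

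\textbf{Removing the twist.} The factors carry twisted arguments $a_1q_2^k$ rather than $a_1$. Since $(q_2,q_1)=1$, substituting $x_1\mapsto \bar q_2x_1$, where $\bar q_2$ is the inverse of $q_2$ modulo $q_1$, permutes the reduced residues modulo $q_1$. This gives
$$S_k(q_1,a_1q_2^k)=S_k(q_1,a_1),$$
and the same holds with the roles of $q_1$ and $q_2$ swapped. This step has to be done for each exponent $k\in\{2,3,4,5,6,8\}$ separately, since the twist $q_2^k$ depends on $k$. That causes no difficulty, because the substitution is made independently inside each factor $S_k$.

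\textbf{Conclusion.} With the twists removed, $U(q_1q_2,a)=U(q_1,a_1)U(q_2,a_2)$. Summing over all pairs $(a_1,a_2)$ then factors the sum and gives $A_n(q_1q_2)=A_n(q_1)A_n(q_2)$. The only point needing care is the untwisting step. Choosing the decomposition of $a$ with the factors $q_2$ and $q_1$ already built in (rather than with inverses) keeps the exponential $e(-an/q)$ untwisted, so all the twisting falls on the $S_k$, where it is removed by the substitution above.
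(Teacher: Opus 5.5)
Your proof is correct and takes essentially the same approach as the paper, which defers to the Chinese remainder theorem argument of Vaughan's Lemma 2.11: factor each $S_k(q_1q_2,a)$ and $e(-an/(q_1q_2))$ using $a\equiv a_1q_2+a_2q_1$, then split the sum over reduced residues. The twist $q_2^k$ can be avoided by writing $e\bigl(ax^k/(q_1q_2)\bigr)=e(a_1x^k/q_1)\,e(a_2x^k/q_2)$ and applying the Chinese remainder theorem to $x$ through its residues modulo $q_1$ and $q_2$, but your untwisting step is valid as it stands.
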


\begin{proof}
    The proof is similar to \cite[Lemma 2.11]{VaughanBook}.
\end{proof}

\begin{lem}\label{Cauchy}
     Let $\mathcal{A},\mathcal{B}$ respectively denote sets of $r,s$ residue classes modulo $q.$
     Suppose further that $0\in \mathcal{B}$ and that for every $b \in \mathcal{B}$ with $b \not\equiv 0 \pmod{q}$ one has $(b,q)=1.$ Let
$\mathcal{A}+\mathcal{B}$ denote the set of residue classes modulo $q$ of the form $a+b$ with $a \in \mathcal{A}$ and $b \in \mathcal{B}.$ Then
$$ card(\mathcal{A}+\mathcal{B})\geq \min (q,r+s-1). $$
\end{lem}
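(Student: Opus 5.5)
This is the Cauchy--Davenport--Chowla lemma, and I would prove it by induction on $s=\operatorname{card}\mathcal{B}$, using the Davenport transform.

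\textbf{Base case and reductions.} If $s=1$, then $\mathcal{B}=\{0\}$ and $\mathcal{A}+\mathcal{B}=\mathcal{A}$ has exactly $r=r+s-1$ elements, so the claim holds. Assume now $s\geq 2$ and that the statement is known for all smaller $\mathcal{B}$ satisfying the hypotheses, with arbitrary $\mathcal{A}$. If $\mathcal{A}+\mathcal{B}$ is all of $\mathbb{Z}/q\mathbb{Z}$ there is nothing to prove. If $\mathcal{A}+b\subseteq\mathcal{A}$ for every $b\in\mathcal{B}$, pick $b\in\mathcal{B}$ with $b\not\equiv0$. Since $(b,q)=1$, the orbit $\{a,a+b,a+2b,\dots\}$ of any $a\in\mathcal{A}$ is contained in $\mathcal{A}$ and covers every residue class. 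Hence $\mathcal{A}=\mathbb{Z}/q\mathbb{Z}$, which gives $\operatorname{card}(\mathcal{A}+\mathcal{B})=q$.

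\textbf{Davenport transform.} Otherwise there are $a_0\in\mathcal{A}$ and $b_0\in\mathcal{B}$ with $a_0+b_0\notin\mathcal{A}$; necessarily $b_0\not\equiv0$. Replace $\mathcal{A}$ by the translate $\mathcal{A}-a_0$ and then swap the roles of the two sets, following the classical argument. Concretely, set
$$\mathcal{A}'=\mathcal{A}\cup(\mathcal{B}+a_0),\qquad \mathcal{B}'=\{b\in\mathcal{B}: a_0+b\in\mathcal{A}\}.$$
Then $0\in\mathcal{B}'$, since $a_0\in\mathcal{A}$. Every nonzero element of $\mathcal{B}'$ is still coprime to $q$, and $b_0\notin\mathcal{B}'$, so $\operatorname{card}\mathcal{B}'<s$. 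Moreover $\mathcal{A}'+\mathcal{B}'\subseteq\mathcal{A}+\mathcal{B}$:
\begin{itemize}
\item for $a\in\mathcal{A}$ and $b'\in\mathcal{B}'\subseteq\mathcal{B}$ this is trivial;
\item for $b\in\mathcal{B}$ and $b'\in\mathcal{B}'$ we have $(a_0+b)+b'=(a_0+b')+b$, and $a_0+b'\in\mathcal{A}$.
\end{itemize}
Finally, $\operatorname{card}\mathcal{A}'+\operatorname{card}\mathcal{B}'=r+s$, because the elements of $\mathcal{B}$ split into those $b$ with $a_0+b\in\mathcal{A}$, which lie in $\mathcal{B}'$, and those with $a_0+b\notin\mathcal{A}$, which contribute new elements $a_0+b$ to $\mathcal{A}'$.

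\textbf{Conclusion.} The inductive hypothesis applied to $(\mathcal{A}',\mathcal{B}')$ gives
$$\operatorname{card}(\mathcal{A}+\mathcal{B})\geq\operatorname{card}(\mathcal{A}'+\mathcal{B}')\geq\min(q,\,r+s-1).$$

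The only delicate step is checking that $\mathcal{B}'$ inherits the hypotheses, namely $0\in\mathcal{B}'$ and coprimality of its nonzero elements. Both follow immediately from $\mathcal{B}'\subseteq\mathcal{B}$ and $a_0\in\mathcal{A}$. The coprimality hypothesis is used exactly once, in the orbit argument of the reduction step, which is the standard replacement for primality of $q$ in Chowla's version of the theorem.
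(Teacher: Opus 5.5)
Your proof is correct: the paper only cites Vaughan's Lemma 2.14, and your strong induction on $s$ via the Davenport (Dyson $e$-) transform $\mathcal{A}'=\mathcal{A}\cup(\mathcal{B}+a_0)$, $\mathcal{B}'=\mathcal{B}\cap(\mathcal{A}-a_0)$ is the standard Chowla-type proof of that lemma, with the coprimality hypothesis used exactly where it should be. Two small points to tidy. First, your orbit step tacitly uses $\mathcal{A}\neq\emptyset$; this is implicit in the statement, which fails for $r=0$, $s\geq 2$. Second, the remark about translating by $a_0$ and swapping the roles of the sets does not describe your actual construction and can be dropped.
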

\begin{proof}
    This is the well-known Cauchy--Davenport theorem. See \cite[Lemma 2.14]{VaughanBook}.

\end{proof}

\begin{lem}\label{Mn}

let $F_p$ denote the finite field of $p$ elements.
Let $M_n(p)$ denote the number of solutions of
$$
x_1^2 + x_2^2 + x_3^3 + x_4^4 + x_5^5 + x_6^6 + x_7^8 \equiv n \pmod{p}
$$
with all $x_i \in \mathbb{F}_p^*=F_p\setminus\{0\}.$  Then
 for all  positive odd  $n$ we have
$$
M_n(p)\ge 1
$$

\end{lem}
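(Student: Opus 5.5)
The plan is to prove the stronger statement that every residue class mod $p$ is a sum $x_1^2+\cdots+x_7^8$ with all $x_i\in\mathbb{F}_p^*$, except when $p=2$. We dispose of $p=2$ directly and then iterate the Cauchy--Davenport inequality (Lemma \ref{Cauchy}) on the sets of nonzero $k$-th powers.

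\emph{The case $p=2$.} Here $\mathbb{F}_2^*=\{1\}$, so the only choice is $x_i=1$ for all $i$. The sum is then $7\equiv 1\pmod 2$. Since $n$ is odd, this gives $M_n(2)\ge 1$.

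\emph{Odd $p$: sizes of the power sets.} For $k\in\{2,3,4,5,6,8\}$ let $\mathcal{A}_k=\{x^k: x\in\mathbb{F}_p^*\}$. Since $\mathbb{F}_p^*$ is cyclic of order $p-1$,
\[
|\mathcal{A}_k|=\frac{p-1}{(k,p-1)}\ \ge\ \frac{p-1}{k}.
\]

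\emph{Odd $p$: applying Lemma \ref{Cauchy}.} Take $q=p$ prime. Then every nonzero residue is coprime to $q$, so the coprimality hypothesis on $\mathcal{B}$ is automatic. To arrange $0\in\mathcal{B}$, replace $\mathcal{B}$ by the translate $\mathcal{B}-b_0$ for some $b_0\in\mathcal{B}$. This changes $\mathcal{A}+\mathcal{B}$ only by a translation, so its cardinality is unaffected. Applying the lemma six times to the seven summand sets $\mathcal{A}_2,\mathcal{A}_2,\mathcal{A}_3,\mathcal{A}_4,\mathcal{A}_5,\mathcal{A}_6,\mathcal{A}_8$ gives
\[
\operatorname{card}\big(\mathcal{A}_2+\mathcal{A}_2+\mathcal{A}_3+\mathcal{A}_4+\mathcal{A}_5+\mathcal{A}_6+\mathcal{A}_8\big)\ \ge\ \min\Big(p,\ 2|\mathcal{A}_2|+|\mathcal{A}_3|+|\mathcal{A}_4|+|\mathcal{A}_5|+|\mathcal{A}_6|+|\mathcal{A}_8|-6\Big).
\]
The minimum propagates correctly through the iteration: once a partial sumset has size $p$, it stays of size $p$.

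\emph{Checking the bound is at least $p$.} Using $|\mathcal{A}_k|\ge (p-1)/k$, the second entry of the minimum is at least
\[
(p-1)\Big(1+\tfrac13+\tfrac14+\tfrac15+\tfrac16+\tfrac18\Big)-6=\tfrac{249}{120}(p-1)-6.
\]
This is at least $p$ as soon as $129p\ge 969$, that is, for $p\ge 11$.

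\emph{The small primes $p=3,5,7$.} Here we use the exact sizes $(p-1)/(k,p-1)$.
\begin{itemize}
\item For $p=3$ the sizes are $1,1,2,1,2,1,1$. The sum minus $6$ is $3$.
\item For $p=5$ they are $2,2,4,1,4,2,1$. The sum minus $6$ is $10\ge 5$.
\item For $p=7$ they are $3,3,2,3,6,1,3$. The sum minus $6$ is $15\ge 7$.
\end{itemize}

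\emph{Conclusion and main obstacle.} For every odd prime the sumset is all of $\mathbb{F}_p$, so in particular $n$ is represented and $M_n(p)\ge1$. The only delicate point is the bookkeeping for small primes, where the crude bound $(p-1)/k$ is too weak. The exact sizes handle this, and the extremal case $p=3$ is tight, with the bound exactly $3$.
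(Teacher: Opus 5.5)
Your proof is correct and follows essentially the paper's route: Cauchy--Davenport applied to the sets of nonzero $k$-th powers, with $p=2$ settled by parity. The differences are minor. For $p\ge 11$ the paper stops after $\mathcal{A}_2+\mathcal{A}_2+\mathcal{A}_3$ (size at least $p-2$, then $|\mathcal{A}_3|\ge 3$) and checks $p\le 7$ by hand, whereas you iterate through all seven sets and handle $p=3,5,7$ with the exact sizes. Your translation remark also supplies the $0\in\mathcal{B}$ hypothesis of Lemma~\ref{Cauchy}, which the paper leaves implicit.
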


\begin{proof}
For sets $A,B$ in $\mathbb{F}_p$, write
$$
A+B=\{a+b:\ a\in A,\ b\in B\}.$$
The assertion is readily checked for all  positive odd  $n$ when $p=2,3,5,7$. We may therefore
suppose that $p\geq 11$. Let
$$
A_k=\{x^k \bmod p:x\in \mathbb F_p^\ast\}\subseteq \mathbb F_p
$$
and
$$
S=A_2+A_2+A_3+A_4+A_5+A_6+A_8.
$$
We denote by $|A_k|$
 the number of elements in
$A_k$.
Since $\mathbb F_p^\ast$ is cyclic, one has
$$
|A_k|=\frac{p-1}{\gcd(p-1,k)}\geq \frac{p-1}{k}.
$$
Set $B=A_2+A_2$. By Lemma \ref{Cauchy}, we get
$$
|B|=|A_2+A_2|\geq \min(p,\,2|A_2|-1)
      \geq \min\!\left(p,\,2\cdot\frac{p-1}{2}-1\right)=p-2.
$$
Then
\begin{align*}
|B+A_3|
   &\geq \min(p,\,|B|+|A_3|-1)\\
   &\geq \min(p,\,(p-2)+3-1)=p.
\end{align*}
Thus $B+A_3=\mathbb F_p$, and hence $S=\mathbb F_p$. The lemma follows.
\end{proof}

\begin{lem}\label{Lemma 3.5}
For $\alpha \in \mathfrak{M},$  we have
$$
g_k(\alpha)=\varphi(q)^{-1}S_k(q,a)\,v_k(\alpha-a/q)+O(P_k L^{-4B}) \quad (k=2,3,4,6,8)
$$
and
$$
g_5(a/q)=\varphi(q)^{-1}S_5(q,a)\,P_5^{\gamma}+O(P_5^{\gamma} L^{-4B}).
$$
\end{lem}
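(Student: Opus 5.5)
The plan is the standard Siegel--Walfisz approximation of prime exponential sums on major arcs. Write $\alpha=a/q+\beta$ with $q\le L^B$ and $|\beta|\le L^B/n$.

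\textbf{Step 1: reduce to the case $\beta=0$ by sorting primes into residue classes.}
Primes dividing $q$ contribute $O(\log q\cdot\log P_k)=O(L^{2})$, which is negligible. Grouping the remaining primes by residue class gives
$$
g_k(\alpha)=\sum_{\substack{h=1\\(h,q)=1}}^{q} e\!\left(\frac{ah^k}{q}\right)\sum_{\substack{\frac12P_k<p\le P_k\\ p\equiv h \ (\mathrm{mod}\ q)}}(\log p)\, e(\beta p^k)+O(L^2).
$$

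\textbf{Step 2: apply Siegel--Walfisz to the inner sum.}
Let $\theta(x;q,h)=\sum_{p\le x,\,p\equiv h\ (q)}\log p$. Since $q\le L^B$, Siegel--Walfisz gives
$$
\theta(x;q,h)=\frac{x}{\varphi(q)}+O\bigl(P_k\exp(-c\sqrt{\log P_k})\bigr)
$$
uniformly for $x\le P_k$. Writing the inner sum as a Stieltjes integral $\int e(\beta x^k)\,d\theta(x;q,h)$ and integrating by parts, the error term is multiplied by
$$
1+\int_{P_k/2}^{P_k}\bigl|\tfrac{d}{dx}e(\beta x^k)\bigr|\,dx\ll 1+|\beta|n\ll L^B .
$$
The main term becomes $\varphi(q)^{-1}\int_{P_k/2}^{P_k}e(\beta x^k)\,dx$.

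\textbf{Step 3: match the main term with $v_k(\beta)$.}
Substitute $m=x^k$, so the integral becomes $\frac1k\int_{2^{-k}n}^{n}m^{1/k-1}e(\beta m)\,dm$. This integral is replaced by the sum $v_k(\beta)$ via Euler--Maclaurin, or simply by bounding the variation of the summand. The error is $\ll 1+|\beta|n^{1/k}\ll L^B$. Multiplying by $\varphi(q)^{-1}$ and summing over the $\varphi(q)$ residues $h$ reassembles $S_k(q,a)$.

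\textbf{Step 4: collect the errors.}
The total error is $\ll qL^{B}P_k\exp(-c\sqrt{L})+qL^B\ll P_kL^{-4B}$ for $n$ large. This settles the claim for $k=2,3,4,6,8$.

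\textbf{The case $k=5$ at $\beta=0$.}
The same argument applies with $\beta=0$ and range $p\le P_5^{\gamma}$. No integration by parts is needed: $\theta(P_5^\gamma;q,h)=P_5^\gamma/\varphi(q)+O(P_5^\gamma\exp(-c\sqrt{L}))$ directly. Summing over $h$ gives $\varphi(q)^{-1}S_5(q,a)P_5^\gamma+O(P_5^{\gamma}L^{-4B})$. Here $P_5^\gamma=n^{\gamma/5}$ is still a fixed power of $n$, so Siegel--Walfisz applies uniformly for $q\le L^B$.

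\textbf{Main obstacle.}
The only delicate point is uniformity: the moduli satisfy $q\le L^B$, which is a power of $\log P_k$, so Siegel--Walfisz (with an ineffective constant) applies. The factors $q$ and $L^B$ lost in the partial summation are absorbed by the saving $\exp(-c\sqrt{L})$.
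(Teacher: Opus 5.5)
Your proposal is correct and matches the paper's approach. The paper only cites Hua's Lemma~6 for this lemma, and that lemma is exactly your argument: sort primes into reduced residue classes modulo $q\le L^B$, apply Siegel--Walfisz, integrate by parts against $e(\beta x^k)$ at a cost of $1+|\beta|n\ll L^B$, and replace $\varphi(q)^{-1}\int_{P_k/2}^{P_k}e(\beta x^k)\,dx$ by $\varphi(q)^{-1}v_k(\beta)$ (your Step~3 error is really $\ll n^{1/k-1}L^B$, smaller than you state); the $k=5$ case at $\beta=0$ needs only the prime number theorem in progressions.
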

\begin{proof}
    It follows from similar arguments used in the proof of Lemma 6 in \cite{Hua}
\end{proof}

\begin{lem}\label{Lemma 3.6}
  For large odd $n,$ we have
    $$\int_{\mathfrak{M}}G(\alpha)e(-\alpha n)d\alpha \gg n^{\Gamma}.$$
\end{lem}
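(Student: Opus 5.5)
The plan is the standard major-arc argument: replace $G$ on $\mathfrak{M}$ by its approximation from Lemma \ref{Lemma 3.5}, integrate over $\beta$ to obtain a singular integral of size $n^{\Gamma}$, and sum over $q$ to obtain a singular series bounded below by an absolute constant.

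\textbf{Approximating $G$.} For $\alpha=a/q+\beta\in\mathfrak{M}(q,a)$, Lemma \ref{Lemma 3.5} approximates each factor $g_k(\alpha)$ with $k\neq5$. For $g_5$ we use a separate device: since $p\le P_5^\gamma$ and $|\beta|\le L^B/n$, we have $|\beta p^5|\le L^Bn^{\gamma-1}\to0$. Partial summation then gives $g_5(\alpha)=g_5(a/q)+O(P_5^\gamma L^{B}n^{\gamma-1})$, and Lemma \ref{Lemma 3.5} applies to $g_5(a/q)$. Multiplying the seven approximations, and bounding every other factor trivially by $P_k$, gives
$$G(\alpha)=\phi(q)^{-7}U(q,a)\,P_5^{\gamma}\,w(\beta)+O\big(n^{\Gamma+1}L^{-4B}\big).$$
Here $P_2^2P_3P_4P_5^\gamma P_6P_8=n^{1+\Gamma}$. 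The measure of $\mathfrak{M}$ is $\ll L^{3B}/n$, so the total error is $\ll n^{\Gamma}L^{-B}$.

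\textbf{Main term.} The main term equals
$$P_5^\gamma\sum_{q\le L^B}\phi(q)^{-7}\sum_{(a,q)=1}U(q,a)e(-an/q)\int_{|\beta|\le L^B/n}w(\beta)e(-\beta n)\,d\beta .$$

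For the singular integral, extend the range to $|\beta|\le1/2$. By Lemma \ref{Lemma 3.3} the tail is
$$\ll n^{1+\Gamma-\gamma/5-1}\int_{L^B/n}^{\infty}(n\beta)^{-6}\,d\beta\ll n^{\Gamma-\gamma/5}L^{-5B}.$$
The completed integral counts weighted representations of $n$ as $m_1+\dots+m_6$ with the $m_i$ in the dyadic ranges. By the usual Fourier identity it is $\gg n^{2/2+1/3+1/4+1/6+1/8-1}$, since the ranges $2^{-k}n<m\le n$ leave a positive-measure set of admissible tuples summing to $n$. Multiplying by $P_5^\gamma$ gives the order $n^{\Gamma}$.

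For the singular series $\mathfrak{S}(n)=\sum_q\phi(q)^{-7}A_n(q)$, Lemma \ref{Lemma 3.2}(i) gives
$$\phi(q)^{-7}|A_n(q)|\ll q^{1-7+7/2+\varepsilon}=q^{-5/2+\varepsilon}.$$
Hence the series converges absolutely and the tail beyond $L^B$ is $o(1)$.

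\textbf{Lower bound for the singular series.} By Lemma \ref{Lemma 3.4}, $\mathfrak{S}(n)=\prod_p\chi_p$ with $\chi_p=\sum_{t\ge0}\phi(p^t)^{-7}A_n(p^t)$. By Lemma \ref{Lemma 3.1}, the powers $p^t$ with $t>l$ vanish, and $l$ is bounded in terms of the exponents, so only finitely many $t$ occur; for $p>2\cdot 8!$ we have $l=1$.

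For large $p$, Lemma \ref{Lemma 3.2}(ii) gives $|S_k(p,a)|\le 8p^{1/2}$, so
$$|\chi_p-1|\le(p-1)\,p^{7/2}\,8^7/(p-1)^7\ll p^{-5/2}.$$
Thus $\prod_{p>C}\chi_p\ge c>0$.

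For each small prime, the standard identity gives $\chi_p=p^{t}\phi(p^t)^{-7}\,\#\{x_i\bmod p^t,\ p\nmid x_i:\ \sum x_i^{k_i}\equiv n\}$ with $t=l$ large enough. It remains to show this count is positive. Lemma \ref{Mn} supplies a solution modulo $p$ with all $x_i\not\equiv0$ (using that $n$ is odd when $p=2$). We lift it to $p^t$ by Hensel's lemma applied to the variable $x_1$ (the square) when $p$ is odd, since $2x_1\not\equiv0$.

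\textbf{Expected obstacle: $p=2$.} For $p=2$ the derivative is divisible by $2$, so Hensel's lemma in its simple form does not apply. I expect this to be the main difficulty. I would handle it by direct verification modulo $2^{l}$ with $l=5$ for the eighth power, where $\theta=3$ and $l=\theta+2$. With all variables odd we have $x^2\equiv x^4\equiv x^6\equiv x^8\equiv1\pmod 8$, and $x^3$, $x^5$ range over all odd residues. The sum is therefore $5+x_3^3+x_5^5\pmod 8$, which is odd. It remains to check modulo $32$ that odd squares cover the classes $1,9,17,25$, and that the cube and fifth-power terms give enough freedom to hit every odd $n$.

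Combining, $\mathfrak{S}(n)\gg1$ uniformly for odd $n$, and therefore the main term is $\gg n^{\Gamma}$, which exceeds the errors $O(n^{\Gamma}L^{-B})$.
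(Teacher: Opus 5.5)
Your proposal is correct and follows the paper's proof step for step: the $g_5$ device, the $O(n^{1+\Gamma}L^{-4B})$ error, the singular integral, absolute convergence and the Euler product. The one divergence is in the local factors, where the paper shows $A_n(p^t)=0$ for $t\ge 2$ (via $S_2$ for odd $p$, via $S_3$ for $2^t$ with $t\ge 3$, and a direct check that $A_n(4)=0$), so each factor equals $p(p-1)^{-7}M_n(p)$ and no lifting is needed; your Hensel/$2$-adic route also works, and the $p=2$ case you flag is no obstacle, because cubing permutes the odd residues modulo $2^t$: set the other six variables equal to $1$ and solve $x_3^3\equiv n-6 \pmod{2^t}$, whose right-hand side is odd.
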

\begin{proof}
For $p\leq P_5^{\gamma}$ and
$|\beta|\leq L^B/n$,
\begin{align*}
    e(\beta p^5)&=1+2\pi i\int_{0}^{\beta p^5}e(t)\,dt
    =1+O(L^B n^{-1} P_5^{5\gamma})
    =1+O(L^B n^{\gamma-1}),
\end{align*}
 by Lemma \ref{Lemma 3.5}, we get
\begin{align}\label{bb}
   g_5(\alpha)&=g_5(a/q)+O(P_5^{\gamma} L^B n^{\gamma-1}) \notag \\
   &=\phi^{-1}(q)S_5(q,a)P_5^{\gamma}+O(P_5^{\gamma}L^{-4B}).
\end{align}
Hence, by \eqref{bb} together with Lemma~\ref{Lemma 3.5}, we have
\begin{align}\label{ee}
G(\alpha)=P_5^{\gamma}\varphi(q)^{-7}U(q,a)\,w(\alpha-a/q)+O(n^{1+\Gamma}L^{-4B}).
\end{align}
Since $\mathfrak{M}$ is a set of measure $O(L^{3B}/n)$, by \eqref{ee} we have
\begin{align}\label{2.3}
    \int_{\mathfrak{M}} \; G(\alpha)e(-\alpha n)\,d\alpha
&=
P_5^{\gamma}\sum_{q\leq L^B}\frac{A_n(q)}{\varphi(q)^7}
\int_{-L^B/n}^{L^B/n} w(\beta)e(-\beta n)\,d\beta \notag\\
&\quad  +O(n^{\Gamma}L^{-B}).
\end{align}
By Lemma \ref{Lemma 3.3}
 one has
$w(\beta)\ll n^{15/8}(1+n|\beta|)^{-2},$
and
\begin{align}\label{L}
&\int_{-L^B/n}^{L^B/n} w(\beta)e(-\beta n)\,d\beta
= \int_{-1/2}^{1/2} w(\beta)e(-\beta n)\,d\beta
   +O\bigg(\int_{L^B/n}^{1/2} |w(\beta)|d\beta\bigg) \notag\\
&= \bigg(\frac{1}{2^2\cdot3\cdot4\cdot6\cdot 8}\times\notag\\
&\sum_{\substack{
m_1+m_2+m_3+m_4+m_6+m_8=n\\
2^{-2}n<m_1,m_2\leq n\; \\
2^{-k}n<m_k\leq n\;(k=3,4,6,8)
}}
(m_1m_2)^{-1/2}m_3^{-2/3}m_4^{-3/4}m_6^{-5/6}m_8^{-7/8}\bigg)
\notag\\
&\quad + O(n^{7/8}L^{-B})\notag\\
& \gg
\sum_{\substack{m_1+m_2+m_3+m_4+m_6+m_8=n
\notag\\
2^{-2}n<m_1,m_2\leq \frac{5n}{16}\; \notag\\2^{-3}n<m_3\leq \frac{5n}{32}, \; 2^{-4}n<m_4\leq \frac{5n}{64}\notag\\
2^{-6}n<m_6\leq \frac{5n}{256}
}}
(m_1m_2)^{-1/2}m_3^{-2/3}m_4^{-3/4}m_6^{-5/6}m_8^{-7/8}\notag\\
&  \gg n^{7/8}.
\end{align}

On the other hand,
by Lemma \ref{Lemma 3.2} one   has
$A_n(q)\ll q^{9/2+\varepsilon}$ uniformly in $n,$
hence the series $$\mathfrak{S}(n)=\sum_{q=1}^{\infty}\varphi(q)^{-7}A_n(q) $$ converges absolutely, and
 \begin{align}\label{kimi}
     \sum_{q\leq L^B}\varphi(q)^{-7}A_n(q)=\mathfrak{S}(n)+O(L^{-B}).
 \end{align}
By Lemma~\ref{Lemma 3.1} and Lemma~\ref{Lemma 3.4}, together with the direct verification
that \(A_n(4)=0\), we have
$A_n(q)=0$
unless \(q\) is square-free.
 Then by Lemma \ref{Lemma 3.4} again we obtain
\begin{align}\label{S}
    \mathfrak{S}(n)
    =\prod_p\left(1+(p-1)^{-7}A_n(p)\right).
\end{align}
 By orthogonality of additive characters modulo $p$ and Lemma \ref{Mn} we see that
\begin{align}\label{kill}
    1+(p-1)^{-7}A_n(p)=p(p-1)^{-7}M_n(p)>0.
\end{align}
For $p>300,$ by Lemma \ref{Lemma 3.2} ($\mathrm{ii}$) we have
\begin{align*}
\left|(p-1)^{-7} A_n(p)\right|
&\leq \frac{(p-1)(\sqrt{p}+1)^2 (7\sqrt{p}+1)\prod\limits_{i=2}^5 \left(i\sqrt{p}+1\right)}{(p-1)^7} \\
&= \frac{p^{\frac{7}{2}}\left(1+\frac{1}{\sqrt{p}}\right)^2
\left(7+\frac{1}{\sqrt{p}}\right)
\prod\limits_{i=2}^5\left(i+\frac{1}{\sqrt{p}}\right)}{(p-1)^6} \\
& \! \leq \frac{2^6}{p^{\frac{5}{2}}}
\left(1+\frac{1}{\sqrt{p}}\right)^2
\left(2+\frac{1}{\sqrt{p}}\right)\\
& \quad \times  \left(3+\frac{1}{\sqrt{p}}\right)\left(4+\frac{1}{\sqrt{p}}\right)
\left(5+\frac{1}{\sqrt{p}}\right)\left(7+\frac{1}{\sqrt{p}}\right) \\
&\leq \frac{8\times10^4}{p^{\frac{5}{2}}}.
\end{align*}
So we get
\begin{align}\label{Y}
   \prod_{p>3 00}\left(1+(p-1)^{-7}A_n(p)\right)\geq \prod_{p>300}\left(1-\frac{8\times10^4}{p^{\frac{5}{2}}}\right)\geq c>0.
\end{align}
Then  by \eqref{S}--\eqref{Y} and Lemma \ref{Mn} one has
\begin{align}\label{2.9}
  \mathfrak{S}(n)
  &\geq c\prod_{p\leq 300}\left(1+(p-1)^{-7}A_n(p)\right)\notag \\
  &= c \prod_{p\leq 300}\left(p(p-1)^{-7}M_n(p)\right)\notag\\
&\geq c\prod_{p\leq 300} p^{-6}>0.
\end{align}

 Now by \eqref{2.3}--\eqref{kimi} and \eqref{2.9}, the lemma follows.

\end{proof}

\section{The Minor arcs }
Set
\begin{align*}
K_1&=\int_0^1|g_2(\alpha)g_3(\alpha)g_6(\alpha)|^2d\alpha, \\
K_2&=\int_0^1|g_2(\alpha)|^2|g_4(\alpha)|^4d\alpha,\\
K_3&=\int_0^1 |g_2(\alpha)g_4(\alpha)g_8^2(\alpha)|^2d\alpha.
\end{align*}
We begin by collecting several mean-value estimates for the exponential
sums appearing in \(G(\alpha)\). The following general result will be used
to estimate the auxiliary mean values \(K_1\), \(K_2\) and \(K_3\).
\begin{lem} \cite[Lemma 1] {Brudern} \label{Brudern}
For $j\geq 1,$ write $$f_j(\alpha)=\sum\limits_{x\leq n^{1/j}}e(\alpha x^j).$$
   Let $2\leq k_1\leq \cdots\leq k_s$ be natural numbers such that
    $$ \sum_{i=j+1}^s\frac{1}{k_i}\leq \frac{1}{k_j}, \quad 1\leq j\leq s-1 .$$
    Then we have
    $$ \int_{0}^1 \bigg| \prod_{i=1}^s f_{k_i}(\alpha)\bigg|^2\ll n^{\frac{1}{k_1}+\cdots\frac{1}{k_s}+\varepsilon}.   $$
\end{lem}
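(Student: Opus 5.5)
The plan is to count solutions of the underlying Diophantine equation directly. By orthogonality, the integral $\int_0^1\big|\prod_{i=1}^s f_{k_i}(\alpha)\big|^2d\alpha$ equals the number $N$ of solutions of
$$ x_1^{k_1}+\cdots+x_s^{k_s}=y_1^{k_1}+\cdots+y_s^{k_s},\qquad 1\leq x_i,y_i\leq P_{k_i}=n^{1/k_i}. $$
I would prove $N\ll n^{1/k_1+\cdots+1/k_s+\varepsilon}$ by induction on $s$. For $s=1$ the equation forces $x_1=y_1$, so $N\leq P_{k_1}=n^{1/k_1}$. Here the hypothesis is empty, but the argument below is uniform anyway.

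For the inductive step, write $h=x_1^{k_1}-y_1^{k_1}$. I would split the solutions into the diagonal ones, with $x_1=y_1$, and the off-diagonal ones, with $h\neq0$.

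\textbf{Diagonal solutions.} These contribute $P_{k_1}$ times the number of solutions of the equation in the variables $2,\dots,s$. The exponents $k_2\leq\cdots\leq k_s$ still satisfy the hypothesis, so by induction this number is $\ll n^{1/k_2+\cdots+1/k_s+\varepsilon}$. The diagonal contribution is therefore of the desired size.

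\textbf{Off-diagonal solutions.} Fix $x_2,\dots,x_s,y_2,\dots,y_s$. This costs at most $\prod_{i\geq2}P_{k_i}^2=n^{2(1/k_2+\cdots+1/k_s)}$ choices. These choices determine $h$, and $0<|h|\ll n$. Factor
$$ h=(x_1-y_1)\,\big(x_1^{k_1-1}+\cdots+y_1^{k_1-1}\big). $$
Then $d=x_1-y_1$ is a divisor of $h$, so there are $\ll n^{\varepsilon}$ choices of $d$. Once $d$ is fixed, $x_1^{k_1}-(x_1-d)^{k_1}=h$ is a nonzero polynomial equation in $x_1$ of degree $k_1-1\geq1$, so it has at most $k_1-1$ roots. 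The off-diagonal contribution is therefore $\ll n^{2(1/k_2+\cdots+1/k_s)+\varepsilon}$. The hypothesis with $j=1$ gives $\sum_{i\geq2}1/k_i\leq 1/k_1$, so this is at most $n^{1/k_1+\sum_{i\geq2}1/k_i+\varepsilon}$, which is the target bound.

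Adding the two contributions completes the induction. At each level $j$ the hypothesis for index $j$ is exactly what compares the off-diagonal count $n^{2\sum_{i>j}1/k_i}$ with $n^{1/k_j+\sum_{i>j}1/k_i}$. This is why the hypothesis is stated for every $j$ and not only for $j=1$.

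The main obstacle is the off-diagonal count. It is essential to fix all the smaller variables first and then apply the divisor bound to the largest power $x_1^{k_1}-y_1^{k_1}$, uniformly in $h\ll n$. Some care is also needed to confirm that ordering the exponents as $k_1\leq\cdots\leq k_s$ makes the inductive hypothesis apply to the tail $(k_2,\dots,k_s)$. It does, because the hypothesis for that tail is the original hypothesis restricted to $j\geq2$.
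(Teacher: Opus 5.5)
Your proof is correct. The paper gives no argument of its own for this lemma and simply cites Br\"udern's Lemma~1. Your induction on $s$ is the standard self-contained proof of that lemma: you handle the diagonal $x_1=y_1$ by the inductive hypothesis for the tail $(k_2,\dots,k_s)$, and you bound the off-diagonal solutions by fixing the other $2(s-1)$ variables and applying the divisor bound to $x_1^{k_1}-y_1^{k_1}=h$ with $0<|h|\leq (s-1)n$. The $j=1$ hypothesis $\sum_{i\geq 2}1/k_i\leq 1/k_1$ then balances the two terms.
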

Applying Lemma \ref{Brudern} to the particular combinations of exponents occurring
in our argument gives the following estimates.
\begin{lem}\label{k_i}
    We have
    $$ K_i\ll n^{1+\varepsilon} \quad (i=1,2,3). $$
\end{lem}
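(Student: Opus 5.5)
Each $K_i$ is an even moment of prime exponential sums. We bound it by the corresponding moment over all integers in a range, and then apply Lemma \ref{Brudern}.

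\textbf{Step 1: pass from primes to integers.} Expand $|\cdot|^2$ and use orthogonality. Then $K_i$ equals a weighted count of solutions of an underlying Diophantine equation in primes, with weights $\prod \log p \ll L^{c}$. Since all weights are nonnegative, we may drop the primality and range restrictions and extend each variable to all $x\le P_k$. This gives
$$K_i \ll L^{c}\int_0^1\Big|\prod_j f_{k_j}(\alpha)\Big|^2 d\alpha .$$
We take the product to be over the exponents appearing in $K_i$, with multiplicity. For $K_2$, we write $|g_2|^2|g_4|^4=|g_2g_4g_4|^2$; for $K_3$, we take $g_8$ twice. The factor $L^c$ is absorbed into $n^\varepsilon$.

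\textbf{Step 2: check the hypotheses of Lemma \ref{Brudern}.}

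For $K_1$ the exponents are $(2,3,6)$. We have $\tfrac13+\tfrac16=\tfrac12\le\tfrac12$ and $\tfrac16\le\tfrac13$, so the hypotheses hold. The bound is $n^{1/2+1/3+1/6+\varepsilon}=n^{1+\varepsilon}$.

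For $K_2$ the exponents are $(2,4,4)$. We have $\tfrac14+\tfrac14=\tfrac12\le\tfrac12$ and $\tfrac14\le\tfrac14$. The bound is $n^{1/2+1/4+1/4+\varepsilon}=n^{1+\varepsilon}$.

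For $K_3$ the exponents are $(2,4,8,8)$. We have $\tfrac14+\tfrac18+\tfrac18=\tfrac12\le\tfrac12$, then $\tfrac18+\tfrac18=\tfrac14\le\tfrac14$, and $\tfrac18\le\tfrac18$. The bound is $n^{1/2+1/4+1/8+1/8+\varepsilon}=n^{1+\varepsilon}$.

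In each case the hypotheses hold with equality at the first step, so Lemma \ref{Brudern} applies directly.

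\textbf{Main obstacle.} The only delicate point is Step 1. A prime sum $g_k$ is not literally of the form $f_k$: it carries $\log p$ weights, is restricted to primes, and (for $k\ne5$) to the range $\tfrac12P_k<p\le P_k$. This mismatch is handled by the positivity argument above. The integral counts solutions of $\sum \pm x_j^{k_j}=0$ weighted by nonnegative coefficients. Removing the restriction to primes and to the dyadic range only increases the count, and replacing each weight by $1$ costs at most a factor $L^{2s}$. Equivalently, one can bound the weights by $L$ at the level of the Diophantine count rather than the exponential sum. The sum $g_5$ does not occur in any $K_i$, so its shortened range plays no role here.
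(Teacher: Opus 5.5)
Your proposal is correct and follows the same route as the paper, which simply asserts that the bounds follow immediately from Lemma~\ref{Brudern}. Your exponent checks for $(2,3,6)$, $(2,4,4)$ and $(2,4,8,8)$ are right, and your Step 1 makes explicit the routine positivity argument that the paper leaves implicit: dropping primality, the dyadic range and the $\log p$ weights at a cost of $L^{O(1)}$.
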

\begin{proof}
   This follows immediately from Lemma \ref{Brudern}.
\end{proof}

We also require the means
\begin{align}\label{J_8}
J_8=\int_0^1|g_2(\alpha)g_8(\alpha)|^2|g_5(\alpha)|^4d\alpha, J_6=\int_0^1|g_2(\alpha)g_6(\alpha)|^2|g_5(\alpha)|^4d\alpha.
\end{align}
The estimates for \(J_8\) and \(J_6\) require more information than the
general mean-value estimate above. We therefore invoke an iterative estimate by Vaughan \cite{Vaughan1986}, which converts the relevant mean values into estimates for the number of solutions of certain auxiliary Diophantine equations.

\begin{lem} \cite [Lemma 4]{Vaughan1986} \label{Vaughan}
Suppose that $k\ge 3$, $1\ge \lambda \ge 1-1/k$, $v=k\lambda-k+1,$   and $P$ is a real number which is
sufficiently large in terms of $\varepsilon.$  Let $R(m)$ denote a non-negative integer-valued arithmetical function
with support in $[1,300P^{k\lambda}]$, let
$$
R_1(m)=\sum\limits_{\substack{P<x\leq 2P\\ \quad {x^k+n=m} }}\sum_{n} R(n),
$$
and
$$
S=\sum_m R(m)^2,\qquad T=\sum_m R_1(m)^2.
$$
Then for $1\leq j\leq k-2$ we have
$$
T \ll PS + P^{\,1+v-2^{\,1-j}}S
      + P^{\,1+\varepsilon+v(1-2^{-j})-(j+1)2^{\,-j}}
        S^{\,1-2^{-j}}\Bigl(\sum_m R(m)\Bigr)^{2^{\,1-j}}.
$$

\end{lem}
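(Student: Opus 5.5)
The plan is to express $T$ as a mean value over the circle and then run Weyl differencing together with Hölder's inequality, in the spirit of Hua's lemma. Put
$$f(\alpha)=\sum_{P<x\leq 2P}e(\alpha x^k),\qquad F(\alpha)=\sum_m R(m)e(\alpha m).$$
By orthogonality,
$$T=\int_0^1|f(\alpha)|^2|F(\alpha)|^2\,d\alpha,$$
which counts solutions of $x^k+n=y^k+n'$ weighted by $R(n)R(n')$.

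The diagonal solutions $x=y$ contribute at most $P\sum_m R(m)^2=PS$, which is the first term of the bound. For $x\neq y$, write $x=y+h$ with $1\leq |h|\leq P$. Then $n'-n=x^k-y^k$ has size roughly $|h|P^{k-1}$. Because $R$ is supported on $[1,300P^{k\lambda}]$, this forces $|h|\ll P^{k\lambda-k+1}=P^{v}$. This restriction on the differences is where the parameter $v$ enters, and the condition $\lambda\geq 1-1/k$ ensures $v\geq 0$.

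For general $j$, I would apply Hölder's inequality in the form
$$\int_0^1|f|^2|F|^2\leq\Bigl(\int_0^1|f|^{2^j}|F|^2\Bigr)^{2^{1-j}}\Bigl(\int_0^1|F|^2\Bigr)^{1-2^{1-j}},$$
where $\int_0^1|F|^2=S$. The first factor I would treat by iterating Weyl's inequality $j$ times, giving
$$|f|^{2^j}\ll P^{2^j-j-1}\sum_{h_1,\dots,h_j}\sum_x e\bigl(\alpha\,\Delta_j(x;h_1,\dots,h_j)\bigr),$$
where $\Delta_j$ is $h_1\cdots h_j$ times a polynomial of degree $k-j\geq 2$ in $x$. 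At each differencing step, the support restriction on $R$ should let me keep the first difference $h_1$ in the shortened range $|h_1|\ll P^{v}$ rather than $|h_1|\leq P$; keeping track of this is what replaces powers of $P$ by powers of $P^{v}$ in the final exponents.

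Integrating against $|F|^2$ then splits into two parts. In the terms with some $h_i=0$, the difference $n-n'$ vanishes, so these are bounded by a power of $P$ times $S$; after taking the $2^{1-j}$-th power and combining with $S^{1-2^{1-j}}$, they should give the second term $P^{1+v-2^{1-j}}S$. In the terms with all $h_i\neq 0$, a fixed nonzero value $n'-n$ is represented as $h_1\cdots h_j\,p(x,h)$ in only $P^{\varepsilon}$ ways by the divisor bound. Hence this part is at most $P^{\varepsilon}\sum_{n,n'}R(n)R(n')=P^{\varepsilon}\bigl(\sum_m R(m)\bigr)^2$, multiplied by the power of $P$ coming from the differencing. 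Raising this to the power $2^{1-j}$ produces the factor $\bigl(\sum R\bigr)^{2^{1-j}}$ in the third term.

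This last step is where I expect the real difficulty. The argument as sketched naturally yields $S^{1-2^{1-j}}$, whereas the stated bound has $S^{1-2^{-j}}$, so the Hölder exponents must be rebalanced. I would first try applying Hölder to $|f|^2$ split as $|f|^{2\cdot 2^{-j}}\cdot|f|^{2-2^{1-j}}$, and use $|f|\leq P$ on the second factor. The exponent $(j+1)2^{-j}$ should then be the saving from $j$ differencings, with the $v$-weights arising from the shortened ranges of the $h_i$. I would fix the exact powers of $P$ by induction on $j$, with the case $j=1$ coming from a single differencing.
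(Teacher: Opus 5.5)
The paper does not prove this lemma; it quotes it from Vaughan. Your sketch has a genuine gap at the H\"older step, and the exponent mismatch you flag at the end is a symptom of that gap.

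Your opening steps are right: the terms with $x=y$ give $PS$, and the support of $R$ forces $|x-y|\le H\asymp P^{v}$. But you then apply H\"older to all of $\int_0^1|f|^2|F|^2\,d\alpha$ and Weyl-difference $|f(\alpha)|^{2^j}$, which throws both facts away.

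Weyl's inequality is pointwise in $\alpha$ and sums over all $|h_1|<P$. The support of $R$ constrains only the final frequency $n'-n=\Delta_j(x;h_1,\dots,h_j)$, i.e.\ the product $|h_1\cdots h_j|\ll P^{v+j-1}$, not $h_1$ alone. Moreover, the tuples with $h_1=0$ (your old diagonal) are back in.

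Carried out honestly, your route gives the Hua-type bound
\[
T\ll P^{2-2^{1-j}}S+P^{2-(j+1)2^{1-j}+\varepsilon}S^{1-2^{1-j}}\Bigl(\sum_m R(m)\Bigr)^{2^{2-j}},
\]
in which $v$ does not occur. Its first term exceeds $PS$ by $P^{1-2^{1-j}}$ and agrees with $P^{1+v-2^{1-j}}S$ only when $v=1$. Your proposed repair, bounding part of $|f|^2$ by $P$, discards a factor $P^{2-2^{1-j}}$ outright and cannot recover the statement.

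The missing idea is to exploit the support on the Fourier side before any inequality. This gives
$T\le PS+2\sum_{1\le h\le H}\int_0^1|F|^2|\phi_h|\,d\alpha$ with $\phi_h(\alpha)=\sum_y e(\alpha((y+h)^k-y^k))$. Then apply H\"older on $\{1\le h\le H\}\times[0,1]$ with respect to $|F|^2\,d\alpha$:
\[
\sum_h\int_0^1|F|^2|\phi_h|\,d\alpha\le (HS)^{1-2^{-j}}\Bigl(\sum_h\int_0^1|F|^2|\phi_h|^{2^j}\,d\alpha\Bigr)^{2^{-j}}.
\]
This is where $S^{1-2^{-j}}$ and $P^{v(1-2^{-j})}$ come from.

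One then differences $\phi_h$, which has degree $k-1$ in $y$, $j$ more times. The resulting $\Delta_{j+1}(y;h,h_2,\dots,h_{j+1})$ keeps positive degree in $y$ exactly when $j\le k-2$, which explains the range of $j$. Since $h$ is itself among the divisors of a fixed $m\ne0$, the non-degenerate terms summed over $h$ are $\ll P^{2^j-j-1+\varepsilon}(\sum_m R(m))^2$. This gives the third term exactly.

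Even in this set-up, your assertion that the degenerate terms ``should give'' $P^{1+v-2^{1-j}}S$ is unsupported. Simply counting tuples with some $h_i=0$ gives $\ll HP^{2^j-1}S$, hence only $P^{1+v-2^{-j}}S$. Reaching the stated exponent $2^{1-j}$ therefore needs more than a count of degenerate tuples, and your proposal provides nothing for this step.
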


\begin{lem}\label{choice J_8}
We have
$$
J_8 \ll P_2^{1+\varepsilon}P_8P_5^{2\gamma}.
$$
\end{lem}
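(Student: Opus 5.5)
By orthogonality, $J_8$ is bounded by $L^8$ times the number of solutions of
$$x_1^2+x_2^8+y_1^5+y_2^5=x_3^2+x_4^8+y_3^5+y_4^5$$
in integers with $x_i\leq P_2$, $x_2,x_4\leq P_8$ and $y_j\leq P_5^{\gamma}$. We drop primality, which only enlarges the count. The plan is to apply Vaughan's iterative Lemma~\ref{Vaughan} with $k=5$, adding the fifth powers one at a time to a base function built from the square and the eighth power. The target $P_2^{1+\varepsilon}P_8P_5^{2\gamma}$ is the diagonal contribution, and it should come out of the term $PS$ in Lemma~\ref{Vaughan} at each step.

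\textbf{Base step.} Let $R_0(m)$ be the number of representations $m=x^2+z^8$ with $x\leq P_2$, $z\leq P_8$. Then
$$S_0=\sum_m R_0(m)^2\ll P_2P_8\, n^{\varepsilon}.$$
This holds either because $\int_0^1|f_2f_8|^2\,d\alpha\ll n^{1/2+1/8+\varepsilon}$ by Lemma~\ref{Brudern} (the condition $1/8\le 1/2$ holds), or by a divisor bound: once $z_1,z_2$ are fixed, $x_1^2-x_2^2=z_2^8-z_1^8$ has $n^\varepsilon$ solutions unless $z_1=z_2$. We also have $\sum_m R_0(m)\ll P_2P_8$.

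\textbf{Iteration.} Split the fifth-power variables dyadically into ranges $P<y\le 2P$ with $P\le P_5^\gamma$; this costs only logarithmic factors. Adding one fifth power gives $R_1$, and we apply Lemma~\ref{Vaughan} with $k=5$. We choose $\lambda$ so that $P^{5\lambda}\asymp n$, i.e.\ $P^{5\lambda}\asymp P_2^2$. Then $\lambda=1/\gamma$, but this exceeds $1$, which is not allowed. So instead we regard the support as lying in $[1,300P^{5}]$ after rescaling, or more honestly take $\lambda=1$, $v=1$ at the cost of the support condition. This is the point needing care: we may instead take $\lambda=1$ and treat $P=P_5^\gamma$ with the support of $R$ of size $n=P^{5/\gamma}$, which is larger. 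I will redo it the other way round: apply the lemma with $R=R_0$, whose support is in $[1,2n]$, and with $P$ defined by $P^{5\lambda}=n$ with $P=P_5^{\gamma}$. This gives $\lambda=1/\gamma>1$, so the lemma does not apply directly. Hence the real plan is to use the cruder route. We write $S_1\ll PS_0+P^{1+v-2^{1-j}}S_0+\cdots$ with $\lambda=1$, $v=1$, valid up to the support length $300P^5$. When $P^5<n$ we bound $R_0$ on sub-blocks of length $P^5$: for each residue block, $R_1$ only involves $R_0(m-y^5)$ with $m$ in a window of length $\asymp P^5$. So we partition the support into about $n/P^5$ windows, apply the lemma to each with the local $S$ and $\sum R$, and sum.

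\textbf{Balancing the terms.} Per window the terms are $PS_w$, $P^{2-2^{1-j}}S_w$, and
$$P^{2+\varepsilon-(j+1)2^{-j}-2^{-j}}\,S_w^{1-2^{-j}}\Bigl(\sum R_w\Bigr)^{2^{1-j}}.$$
The middle term dominates $PS$ as soon as $j\ge 2$, so we take $j=1$: the middle term is then $PS_w$, and the third term is $P^{1/2+\varepsilon}S_w^{1/2}\sum R_w$. We need the third term to be $\ll PS_w$ after summation. By Cauchy--Schwarz over windows, $\sum_w S_w^{1/2}\sum R_w$ is controlled by $S_0^{1/2}$ and by $\sum_m R_0$ spread over windows. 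Here we use that $R_0$ is roughly uniform, with average size $P_2P_8/n$ per integer, so $\sum_{m\in w} R_0\approx P^5P_2P_8/n$. The resulting inequality reduces to a numerical condition on $P$, namely $P\le P_5^{\gamma}$ against $P_2$ and $P_8$. This yields $S_1\ll n^\varepsilon P_5^\gamma P_2P_8$.

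Repeating once more for the second fifth power gives $S_2\ll n^\varepsilon P_5^{2\gamma}P_2P_8$, which is the claim since $J_8\ll L^8 S_2$. The main obstacle is the second iteration: there the window sums of $R_1$ are larger, by a factor $P_5^\gamma$, and the condition from the third term becomes binding. I expect that this is exactly where $\gamma=265/296$ (or something weaker) is forced, and I would verify the exponent inequality there first, possibly using $j=2$ in that step if $j=1$ fails.
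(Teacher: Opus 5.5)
Your proposal has a genuine gap. Iterating Lemma~\ref{Vaughan} with $k=5$ on a base function that contains the square cannot give the bound, and this is a numerical failure, not merely an unchecked exponent.

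Because $R_0$ (from $x^2+z^8$) is spread over $[1,2n]$ while $P^5\le n^{\gamma}$, you are forced into windows of length $\asymp P^5$ with $\lambda=1$, $v=1$. Windows with $\lambda<1$ lose a factor $P^{5(1-\lambda)}$ when recombined. Then only $j=1$ is usable, since for $j\ge 2$ the middle term $P^{2-2^{1-j}}S_w$ already exceeds $PS_w$. Take $j=1$ and write $\Sigma_w=\sum_{m\in w}R_0(m)$. Using $S_w\ge\Sigma_w$ and H\"older over the $\asymp n/P^5$ windows, the third term summed over windows satisfies
\[
P^{1/2}\sum_w S_w^{1/2}\Sigma_w\ \ge\ P^{1/2}\sum_w \Sigma_w^{3/2}\ \gg\ P^{1/2}\Bigl(\frac{P^5}{n}\Bigr)^{1/2}(P_2P_8)^{3/2}=P^{3}n^{7/16}.
\]
The target is $PS_0\le Pn^{5/8+\varepsilon}$, so you would need $P\le n^{3/32}$. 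But your dyadic $P$ runs up to $P_5^{\gamma}=n^{53/296}\approx n^{0.179}$. So the first iteration already fails. The second is worse: the same computation at $P=P_5^{\gamma}$ would require $\gamma\le 3/8$. The cause is that putting the square into the base function makes its support far longer than the range $P^5$ that the lemma can exploit.

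The missing idea is to remove the squares first. Use the divisor bound you already used for $S_0$, but at the outermost level. Fix $y_1,y_2,z_1,\dots,z_4$ and put $N=y_1^8-y_2^8+z_1^5+z_2^5-z_3^5-z_4^5$. Then $x_1^2-x_2^2=N$ has $\ll n^{\varepsilon}$ solutions if $N\ne 0$ and at most $P_2$ if $N=0$. Hence
\[
J_8\ll L^8\bigl(P_2T_1+n^{\varepsilon}P_8^2P_5^{4\gamma}\bigr),
\]
where $T_1$ counts solutions of $y_1^8-y_2^8=z_1^5+z_2^5-z_3^5-z_4^5$. The second term is acceptable because $\gamma\le 15/16$.

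Lemma~\ref{Vaughan} is then applied once, with $k=8$, so that it is the eighth power that gets added, not the fifth powers. The parameters are $P=\tfrac12P_8$, $\lambda=\gamma\ge 7/8$, $v=8\gamma-7$, $j=3$, and $R$ the representation function of $z_1^5+z_2^5$. This setup works for the following reasons:
\begin{itemize}
\item The support of $R$ is $[1,2n^{\gamma}]$, which fits $[1,300P^{8\gamma}]$ with no windowing.
\item $R(m)\ll m^{\varepsilon}$ gives $S\ll P_5^{2\gamma+\varepsilon}$ and $\sum_m R(m)\le P_5^{2\gamma}$.
\item The middle term is at most $PS$, since $8\gamma-25/4<1$.
\item The third term $P_8^{1/2+7\gamma-49/8+\varepsilon}P_5^{9\gamma/4}$ is $\ll P_8P_5^{2\gamma}$ exactly when $\gamma\le 265/296$.
\end{itemize}
This last condition is where $\gamma$ is forced, not in a second $k=5$ iteration. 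It yields $T_1\ll P_8^{1+\varepsilon}P_5^{2\gamma}$, and hence the lemma.
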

\begin{proof}
We first obtain an estimate for $T_1$,    the number of integral solutions of the equation
$$
y_1^8-y_2^8=z_1^5+z_2^5-z_3^5-z_4^5,
$$
where
\begin{align*}
    \qquad \frac12 P_8<y_i\leq P_8,\qquad 1\leq z_j\leq P_5^{\gamma}
\quad (1\leq i\leq 2,\ 1\leq j\leq 4).
\end{align*}

In Lemma  \ref{Vaughan}, we take $k=8$, $j=3$, $\lambda=\gamma$,
$P=\tfrac12 P_8$, $v=8\gamma-7$ and $R(m)$ as the number of choices for $z_1,z_2\in[1,P_5^{\gamma}]$ with
$z_1^5+z_2^5=m$. Then, the quantity $T$ in Lemma \ref{Vaughan} coincides with our $T_1$.

Observe that $z_1^5+z_2^5=m$ forces $(z_1+z_2)\mid m$. A standard divisor counting argument therefore yields
 $R(m)\ll m^{\varepsilon}$. Hence we have
$$
\sum_m R(m)\leq P_5^{2\gamma},\qquad
\sum_m R(m)^2 \ll P_5^{2\gamma+\varepsilon}.
$$
Inserting these estimates into Lemma \ref{Vaughan} we obtain
$$
T_1 \ll P_8 P_5^{2\gamma+\varepsilon}+P_8^{8\gamma-25/4}P_5^{2\gamma+\varepsilon}
+ P_8^{1/2+\frac{7}{8}(8\gamma-7)+\varepsilon}(P_5^{2\gamma})^{9/8}
\ll P_8^{1+\varepsilon}P_5^{2\gamma}.
$$
 Considering the underlying Diophantine equation, we have
 \begin{align*}
&\int_0^1 |g_2(\alpha) g_8(\alpha)|^2 |g_5(\alpha)|^4 \, d\alpha
\\
& \ll L^8
\sum_{\substack{\frac{P_8}{2}<y_1,y_2\leq P_8\\ 1<z_1,z_2,z_3,z_4\leq P_5^\gamma}}
\sum_{\substack{1<x_1,x_2\leq P_2\\
x_1^2-x_2^2=y_1^8-y_2^8+z_1^5+z_2^5-z_3^5-z_4^5}} 1 \\
&\ll L^8 P_2
\sum_{\substack{\frac{P_8}{2}<y_1,y_2\leq P_8\\ 1<z_1,z_2,z_3,z_4\leq P_5^\gamma\\
y_1^8-y_2^8+z_1^5+z_2^5-z_3^5-z_4^5=0}} 1 \\
&\quad{}+ L^8
\sum_{\substack{\frac{P_8}{2}<y_1,y_2\leq P_8\\ 1<z_1,z_2,z_3,z_4\leq P_5^\gamma\\
y_1^8-y_2^8+z_1^5+z_2^5-z_3^5-z_4^5\ne0}}
\tau\!\left(\left|y_1^8-y_2^8+z_1^5+z_2^5-z_3^5-z_4^5\right|\right) \\
&\ll L^8 P_2T_1 + L^8 P_2^{\varepsilon} P_8^2P_5^{4\gamma} \\
& \ll P_2P_8^{1+\varepsilon}P_5^{2\gamma}+P_2^{\varepsilon}P_8^2P_5^{4\gamma}\\
&\ll P_2^{1+\varepsilon}P_8P_5^{2\gamma},
\end{align*}
where $\tau(n)$ is the divisor function.
Therefore, the proof of the lemma is complete.
\end{proof}
A similar argument is needed for $J_6$. Since the choice of parameters in Lemma \ref{Vaughan} is different here, we include the details for completeness.
    \begin{lem} \label{choice J_6}
We have
$$
J_6 \ll P_6^{2+\varepsilon}P_5^{4\gamma}.
$$
\end{lem}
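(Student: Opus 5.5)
The plan is to follow the proof of Lemma~\ref{choice J_8} step by step, with the eighth powers replaced by sixth powers. The first step is to bound $T_2$, the number of integral solutions of
$$
y_1^6-y_2^6=z_1^5+z_2^5-z_3^5-z_4^5,\qquad \tfrac12P_6<y_i\leq P_6,\quad 1\leq z_j\leq P_5^{\gamma}.
$$
We apply Lemma~\ref{Vaughan} with $k=6$, $P=\tfrac12P_6$ and $\lambda=\gamma$, so that $v=6\gamma-5$. We take $R(m)$ to be the number of pairs $z_1,z_2\leq P_5^{\gamma}$ with $z_1^5+z_2^5=m$, so that $T=T_2$.

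The hypotheses are easy to check. Since $\gamma=265/296>5/6$, we have $1\geq\lambda\geq 1-1/k$. The support of $R$ lies in $[1,2n^{\gamma}]$, and $n^{\gamma}=P_6^{6\gamma}\ll P^{k\lambda}$, so it sits inside $[1,300P^{k\lambda}]$. As in Lemma~\ref{choice J_8}, the divisibility $(z_1+z_2)\mid m$ together with a divisor bound gives $R(m)\ll m^{\varepsilon}$. Hence
$$
\sum_m R(m)\leq P_5^{2\gamma},\qquad S=\sum_mR(m)^2\ll P_5^{2\gamma+\varepsilon}.
$$

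Next I would choose $j=2$, which is admissible since $1\leq j\leq k-2=4$. With $v=6\gamma-5\approx 0.372$, Lemma~\ref{Vaughan} gives three terms:
\begin{itemize}
\item the first is $P_6P_5^{2\gamma+\varepsilon}$;
\item the second is $P_6^{1/2+v}P_5^{2\gamma+\varepsilon}$, which is smaller than the first;
\item the third is $P_6^{1/4+3v/4+\varepsilon}S^{3/4}\bigl(P_5^{2\gamma}\bigr)^{1/2}\ll P_6^{1/4+3v/4+\varepsilon}P_5^{5\gamma/2+\varepsilon}$.
\end{itemize}
In powers of $n$, the first term is about $n^{0.525}$ and the third about $n^{0.536}$. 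So
$$
T_2\ll n^{\varepsilon}\bigl(P_6P_5^{2\gamma}+P_6^{(1+3v)/4}P_5^{5\gamma/2}\bigr).
$$

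The final step is the same Diophantine conversion as for $J_8$. Expand $|g_2g_6|^2|g_5|^4$, drop the weights $\log p$ at a cost of $L^8$, and count pairs $x_1,x_2\leq P_2$ with $x_1^2-x_2^2=N$, where $N=y_1^6-y_2^6+z_1^5+z_2^5-z_3^5-z_4^5$. When $N=0$ there are at most $P_2$ choices for $(x_1,x_2)$. When $N\neq0$ there are at most $\tau(|N|)\ll n^{\varepsilon}$ choices. This yields
$$
J_6\ll L^8P_2T_2+n^{\varepsilon}P_6^2P_5^{4\gamma}.
$$
The second term is already the claimed bound. It remains to show $P_2T_2\ll n^{\varepsilon}P_6^2P_5^{4\gamma}$, that is, $T_2\ll n^{1/3+4\gamma/5-1/2+\varepsilon}\approx n^{0.549}$.

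The main obstacle is this numerical check: both terms in the bound for $T_2$ must fall below $n^{1/3+4\gamma/5-1/2}$. The diagonal term $n^{0.525}$ and the $j=2$ term $n^{0.536}$ both do, so $j=2$ should suffice. If the bookkeeping turns out tighter than my rough computation suggests, I would recompute the third term with $j=3$ or $j=4$ and keep whichever choice gives the smallest exponent.
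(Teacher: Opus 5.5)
Your proposal is correct and follows the paper's proof essentially verbatim. Both apply Lemma~\ref{Vaughan} with $k=6$, $j=2$, $\lambda=\gamma$, $P=\tfrac12P_6$, $v=6\gamma-5$ and $R(m)$ counting $z_1^5+z_2^5=m$, giving $T_2\ll P_6^{(9\gamma-7)/2+\varepsilon}P_5^{5\gamma/2}$ (your exponent $(1+3v)/4$ is the same quantity). Both then use the divisor-function conversion $J_6\ll L^8P_2T_2+L^8n^{\varepsilon}P_6^2P_5^{4\gamma}$, and your numerical check ($T_2\ll n^{0.536}$ against the required $n^{0.5495}$) is exactly the comparison the paper's final inequality relies on, so the fallback to $j=3$ or $j=4$ is not needed.
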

\begin{proof}
We first get an estimate for $T_2$,    the number of integral solutions of the equation
$$
y_1^6-y_2^6=z_1^5+z_2^5-z_3^5-z_4^5,
$$
where
\begin{align*}
    \qquad \frac12 P_6<y_i\leq P_6,\qquad 1\leq z_j\leq P_5^{\gamma}
\quad (1\leq i\leq 2,\ 1\leq j\leq 4).
\end{align*}
In Lemma \ref{Vaughan}, we specialize to the parameters \(k=6\), \(j=2\), \(\lambda=\gamma\), \(P=\tfrac12 P_6\), \(v=6\lambda-5\), and define \(R(m)\) as the number of choices for \(z_1,z_2\in[1,P_5^{\gamma}]\) with \(z_1^5+z_2^5=m\). Thus the quantity \(T\) in Lemma \ref{Vaughan} coincides with our \(T_2\).

As \(z_1^5+z_2^5=m\), we deduce \((z_1+z_2)\mid m\). A standard divisor counting argument thus gives \(R(m)\ll m^{\varepsilon}\). Hence we have
$$
T_2 \ll P_6 P_5^{2\gamma+\varepsilon}
+P_6^{6\gamma-9/2}P_5^{2\gamma}+ P_6^{(9\gamma-7)/2}(P_5^{5\gamma /2})
\ll P_6^{(9\gamma-7)/2}P_5^{5\gamma /2}.
$$
 Considering the underlying Diophantine equation, we have
 \begin{align*}
&\int_0^1 |g_2(\alpha) g_6(\alpha)|^2 |g_5(\alpha)|^4 \, d\alpha
\ll L^8P_2T_2 +L^8P_2^{\varepsilon}P_6^2P_5^{4\gamma}
 \ll P_6^{2+\varepsilon}P_5^{4\gamma}.
\end{align*}
Therefore, the proof of the lemma is complete.
\end{proof}

We now estimate the contribution from the set \(\mathfrak n\). Combining the preceding mean-value estimates with H\"older's inequality gives a power saving.
\begin{lem} \label{n}
We have
    $$\int_\mathfrak{n} |G(\alpha)|\,d\alpha \ll n^{\Gamma-\frac{2}{8325}+\varepsilon}.$$
\end{lem}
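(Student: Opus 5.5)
The plan is to pull out only a small power of $g_3$ through the defining condition of $\mathfrak n$, and to control the rest of $|G|$ by a single multi-factor H\"older inequality built from the mean values $K_1,K_2,K_3$ of Lemma \ref{k_i} together with $J_8$ and $J_6$ of Lemmas \ref{choice J_8} and \ref{choice J_6}. On $\mathfrak n$ we have $|g_3(\alpha)|\le P_3^{1-\eta}$. We write
$$
|G(\alpha)|=|g_3(\alpha)|^{1/6}\cdot |g_2|^2|g_3|^{5/6}|g_4||g_5||g_6||g_8|,
$$
bound the first factor by $P_3^{(1-\eta)/6}$, and apply H\"older to the remaining product over $[0,1]$.

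\textbf{Choosing the H\"older weights.} Give weights $a,b,c,d,e$ to the integrands of $K_1,K_2,K_3,J_8,J_6$ respectively. We need $a+b+c+d+e=1$, and the powers of each sum must match:
\begin{itemize}
\item $g_2$: the count is $2(a+\cdots+e)=2$, which holds automatically;
\item $g_4$: $4b+2c=1$;
\item $g_5$: $4d+4e=1$;
\item $g_6$: $2a+2e=1$;
\item $g_8$: $4c+2d=1$;
\item $g_3$: the power used is $2a$.
\end{itemize}
Solving this linear system gives
$$
e=\tfrac1{12},\quad c=d=b=\tfrac16,\quad a=\tfrac5{12}.
$$
So $g_3$ appears with total power $2a=5/6$, which leaves exactly the $1/6$ power that we took out by the supremum. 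Hence
$$
\int_{\mathfrak n}|G|\,d\alpha\le P_3^{(1-\eta)/6}\,K_1^{5/12}K_2^{1/6}K_3^{1/6}J_8^{1/6}J_6^{1/12}.
$$

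\textbf{Bookkeeping of exponents.} We insert the bounds $K_i\ll n^{1+\varepsilon}$, $J_8\ll n^{1/2+1/8+2\gamma/5+\varepsilon}$ and $J_6\ll n^{1/3+4\gamma/5+\varepsilon}$. The resulting exponent of $n$ is
$$
\tfrac34+\tfrac1{12}+\tfrac1{48}+\tfrac1{36}+\tfrac1{18}+\tfrac{2\gamma}{15}-\tfrac{\eta}{18}
=\tfrac{15}{16}+\tfrac{2\gamma}{15}-\tfrac{\eta}{18}.
$$
Since $\Gamma=\tfrac78+\tfrac\gamma5$, the difference from $\Gamma$ equals $\tfrac1{16}-\tfrac{\gamma}{15}-\tfrac{\eta}{18}$. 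With $\gamma=265/296$ we have $\tfrac1{16}-\tfrac{53}{888}=\tfrac5{1776}$, and $\eta/18=11/3600$. Therefore
$$
\tfrac1{16}-\tfrac{\gamma}{15}-\tfrac{\eta}{18}=\tfrac5{1776}-\tfrac{11}{3600}=-\tfrac{2}{8325},
$$
which is exactly the claimed saving, up to $\varepsilon$.

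\textbf{Main obstacle.} The delicate step is finding the H\"older exponents. They must simultaneously consume the $g_4,g_5,g_6,g_8$ factors exactly and leave a positive power of $g_3$ for the pointwise bound. The system above is rigid: it forces the sup exponent $1/6$, and the net saving is positive only because of the precise choices of $\gamma$ and $\eta$. I would therefore solve the linear system first and then verify the arithmetic carefully. There is one further point to check. The $K_i$ of Lemma \ref{k_i} come from Lemma \ref{Brudern}, which concerns sums over all integers, while our $g_k$ are prime sums weighted by $\log p$. The transfer goes through by counting solutions of the underlying equation, at the cost of $L^{O(1)}\ll n^{\varepsilon}$.
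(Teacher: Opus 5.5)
Your proposal is correct and is the paper's argument: the paper applies the same H\"older split $J_6^{1/12}J_8^{1/6}K_1^{5/12}K_2^{1/6}K_3^{1/6}\sup_{\alpha\in\mathfrak n}|g_3(\alpha)|^{1/6}$ and then inserts Lemmas \ref{k_i}, \ref{choice J_8}, \ref{choice J_6} together with $|g_3|\le P_3^{1-\eta}$. Your derivation of the weights from the linear system and your exponent count $\tfrac1{16}-\tfrac{\gamma}{15}-\tfrac{\eta}{18}=-\tfrac{2}{8325}$ are both correct, and they supply the computation the paper leaves implicit.
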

\begin{proof}
 By H\"older's inequality and Lemma \ref{k_i}, together with Lemma \ref{choice J_8} and Lemma \ref{choice J_6}, we have
\begin{align*}
   \int_{ \mathfrak{n}}|G(\alpha)|\,d\alpha &\ll J_6^{\frac{1}{12}}J_8^{\frac{1}{6}}K_1^{\frac{5}{12}}K_2^{\frac{1}{6}}K_3^{\frac{1}{6}}
\sup_{\alpha\in \mathfrak{n}} |g_3(\alpha)|^{1/6}\\
&\ll n^{\Gamma-\frac{2}{8325}+\varepsilon}.
\end{align*}
\end{proof}

We now recall two lemmas on exponential sums over primes, following Kumchev \cite{Kumchev}.
\begin{lem}\cite[Theorem 3]{Kumchev} \label{cubic}
Let
\[
h_3(\alpha;X)
=
\sum_{X<p\leq 2X} e(\alpha p^3),
\]
where \(X\) is sufficiently large and \(\alpha\in\mathbb R\). For every
\(\varepsilon>0\), there exist integers \(a\) and \(q\) satisfying
\[
1\leq q\leq X^{12/7},\qquad
(a,q)=1,\qquad
|q\alpha-a|\leq X^{-12/7},
\]
such that
\[
h_3(\alpha;X)
\ll
X^{13/14+\varepsilon}
+
\frac{X^{1+\varepsilon}}
{\bigl(q+X^3|q\alpha-a|\bigr)^{1/2}}.
\]
The implied constant depends at most on \(\varepsilon\).
\end{lem}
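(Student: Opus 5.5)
This is Kumchev's Theorem 3, quoted from the literature, so the paper will simply cite it. To reprove it, I would follow Kumchev's route: combine a Harman-type sieve decomposition of the prime sum with Vinogradov's mean value theorem, now available in the sharp Bourgain--Demeter--Guth form, for cubic Weyl sums.

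\textbf{Step 1: decomposition.} Replace the indicator of primes in \((X,2X]\) by a Heath-Brown or Vaughan identity. This writes \(h_3(\alpha;X)\) as \(O(\log^{c}X)\) sums of two kinds:
\[
\text{Type I: } \sum_{m\sim M} a_m\sum_{\ell\sim X/M} e(\alpha m^3\ell^3),
\qquad
\text{Type II: } \sum_{m\sim M}\sum_{\ell\sim X/M} a_m b_\ell\, e(\alpha m^3\ell^3).
\]
The coefficients satisfy \(|a_m|,|b_\ell|\ll X^{\varepsilon}\). For Type I sums \(M\) is small, and for Type II sums \(M\) lies in a middle range \([X^{\theta},X^{1-\theta}]\).

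\textbf{Step 2: choice of \(a/q\).} By Dirichlet's theorem with parameter \(X^{12/7}\), choose \(a,q\) with \(q\le X^{12/7}\) and \(|q\alpha-a|\le X^{-12/7}\).

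\textbf{Step 3: Type I sums.} Apply Weyl differencing, or the major-arc approximation for the inner complete cubic sum over \(\ell\) with coefficient \(\alpha m^3\). Combined with a mean value estimate over the rational approximations to \(\alpha m^3\), this gives the bound \(X^{1+\varepsilon}(q+X^3|q\alpha-a|)^{-1/2}+X^{13/14+\varepsilon}\). When \(q+X^3|q\alpha-a|\) is small, an asymptotic evaluation via Siegel--Walfisz-type approximants gives the \(q^{-1/2}\) decay directly.

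\textbf{Step 4: Type II sums.} Apply Cauchy--Schwarz in \(m\) to reduce to
\[
\sum_{\ell_1,\ell_2} \Bigl|\sum_{m} e\bigl(\alpha m^3(\ell_1^3-\ell_2^3)\bigr)\Bigr|.
\]
Then bound the inner cubic Weyl sums using the approximation to \(\alpha(\ell_1^3-\ell_2^3)\) and a large-sieve or divisor-counting argument over \(h=\ell_1^3-\ell_2^3\).

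\textbf{Main obstacle.} The hardest part is balancing the ranges of \(M\) so that every Type I and Type II sum delivers the saving \(X^{-1/14}\). This fixes the exponent \(13/14\) and the approximation level \(X^{12/7}\). Kumchev achieves this with careful bilinear estimates and a sieve that discards a small, positive-density set of bad terms. In the present paper I would not redo this analysis; I would cite \cite[Theorem 3]{Kumchev} directly, and check only that the normalization of \(h_3\) (no \(\log p\) weight, dyadic range) matches the paper's use of \(g_3\), which partial summation handles.
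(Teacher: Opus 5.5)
Your proposal matches the paper: the lemma is quoted from Kumchev's Theorem 3 without proof, and your final plan is likewise to cite it, with partial summation converting $h_3$ to the weighted sum $g_3$ as in the paper's final section. The reproof sketch is unnecessary, and you should not rely on it as written. For instance, an upper bound for an exponential sum over primes cannot be obtained by discarding a positive-density set of sieve terms, because each discarded piece could itself be of size $X$.
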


\begin{lem}\cite[Theorem 2]{Kumchev}\label{Lemma 5.2}
Let $k\in\mathbb{N},\alpha\in\mathbb{R}$ and $h_k(\alpha,X)=\sum_{X<p\leq 2X}e(\alpha p^k).$   Suppose that there exist
$a\in\mathbb{Z}$ and $q\in\mathbb{N}$ satisfying
$$
1\leq q\leq Q,\qquad (a,q)=1,\qquad |q\alpha-a|<QX^{-k}.
$$
with $Q\leq X$. Then, for any fixed $\varepsilon>0$,
$$
h_k(\alpha,X)\ll Q^{1/2}X^{11/20+\varepsilon}
+\frac{q^{\varepsilon}X(\log X)^C}{(q+X^k|q\alpha-a|)^{1/2}},
$$
where $C>0$ is an absolute constant and the implied constant in $\ll$ depends
at most on $k$ and $\varepsilon$.
\end{lem}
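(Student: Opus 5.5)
Lemma \ref{Lemma 5.2} is quoted from Kumchev, and in the paper it can be used as a black box. If I had to prove it, the plan below is the route I would follow. I first replace the sum over primes by $\sum_{X<m\leq 2X}\Lambda(m)e(\alpha m^k)$; prime powers contribute $O(X^{1/2})$, and partial summation removes the logarithm. I then decompose $\Lambda$ by Heath-Brown's identity, or by Vaughan's identity with parameters $U,V$ to be fixed at the end. This writes the sum as $O(\log^C X)$ bilinear pieces
\[
\sum_{m\sim M}\sum_{l\sim L} a_m b_l\, e\big(\alpha (ml)^k\big), \qquad ML\asymp X .
\]
Each piece is either of type I, with $b_l\equiv 1$ (or $\log l$) and $M$ small, or of type II, with $M$ in a middle range $[X^{\theta_1},X^{\theta_2}]$ and arbitrary bounded coefficients.

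\emph{Type I sums.} For fixed $m$, the inner sum over $l$ is a complete Weyl sum in $l$ with leading coefficient $\alpha m^k$. I use the rational approximation $a/q$ to produce an approximation $a m^k/q$ for $\alpha m^k$, with denominator $q/(q,m^k)$. On this approximation I apply the major-arc asymptotic for Weyl sums: inner sum $= q'^{-1}S(q',a')\,v(\beta m^k)$ plus an error $O(q^{1/2+\varepsilon}(1+X^k|\beta|)^{1/2})$. Summing over $m$ and using $S(q',a')\ll q'^{1-1/k}$ on average over $m$, the main parts give $X(q+X^k|q\alpha-a|)^{-1/2}$ up to $q^\varepsilon$ and logarithms. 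The error parts give $Q^{1/2}M X^{\varepsilon}$, which is admissible once $M\leq X^{11/20}$. When $q$ is small, the term $X(\log X)^C(q+X^k|q\alpha-a|)^{-1/2}$ must really come out with only a power of $\log X$ rather than $X^\varepsilon$. For that I would fall back on the Siegel--Walfisz treatment of primes in progressions, expanding into Dirichlet characters modulo $q$.

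\emph{Type II sums.} I apply Cauchy--Schwarz over $m$, expand the square, and interchange the order of summation. This gives
\[
M\sum_{l_1,l_2\sim L}\Big|\sum_{m\sim M} e\big(\alpha m^k(l_1^k-l_2^k)\big)\Big|.
\]
The diagonal $l_1=l_2$ contributes $ML^2$. Off the diagonal, I would apply Weyl differencing $k-1$ times in $m$ (or use a Vinogradov-type mean value), which reduces matters to sums of $\min\big(M,\|\alpha h(l_1^k-l_2^k)\|^{-1}\big)$ over $h\ll M^{k-1}$. These I count using the approximation $|q\alpha-a|<QX^{-k}$ and the divisor bound for the multiplicities of $h(l_1^k-l_2^k)$. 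The result should be a saving of the shape $X^{1+\varepsilon}\big(Q^{1/2}X^{-9/20}+(q+X^k|q\alpha-a|)^{-1/2}\big)$, valid on the whole type II range.

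I expect the main obstacle to be the balancing step. One must choose $\theta_1,\theta_2$, equivalently $U$ and $V$, so that the type I range (which needs $M\leq X^{11/20}$) and the type II range overlap completely. The type II estimate has to be strong enough near the edge of that range, and this is exactly where the exponent $11/20$ is decided. If plain Weyl differencing loses too much there, I would first try replacing it with a large-sieve bound over $m$ combined with Hua's inequality in $l$. Failing that, I would use Harman's sieve in place of Vaughan's identity, since it allows discarding awkward bilinear ranges at the cost of an upper bound only; that suffices here because only an upper bound for $h_k$ is claimed.
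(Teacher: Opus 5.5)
The paper gives no proof of this statement. It imports Kumchev's Theorem 2 verbatim, so your opening remark (use it as a black box) is exactly what the paper does. The proof sketch you then offer has a genuine gap, and it comes earlier than the balancing step you flag.

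\textbf{The gap: the bilinear pieces cannot see $q^{-1/2}$.} The factor $(q+X^k|q\alpha-a|)^{-1/2}$ reflects the fact that primes lie in reduced residue classes modulo $q$. Their main term therefore involves the reduced sum $S_k(q,a)\ll q^{1/2+\varepsilon}$ of Section 3. Once $\Lambda$ is split into bilinear pieces in which $l$ (or $m$) runs over all residues, this information is lost, and the pieces decay only like $q^{-1/k}$.

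Concretely, take $q=p^k\leq Q$ with $p\nmid 2k$, and $\alpha=a/q$. For every $b$ prime to $p$,
\[
\sum_{x=1}^{q}e(bx^k/q)=p^{k-1}=q^{1-1/k},
\]
since the multiples of $p$ contribute $p^{k-1}$ and the units contribute $S_k(p^k,b)=0$ by Lemma~\ref{Lemma 3.1}. Hence for $m$ prime to $p$, the main term of each inner $l$-sum in your Type I pieces is $q^{-1/k}$ times its length. Bounding the pieces via $|S|\ll q'^{1-1/k}$, as you propose, gives only $Xq^{-1/k+\varepsilon}$, which is far larger than $Xq^{-1/2}$ when $k\geq 3$.

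Your Type II claim fails outright. Take $a_m=\mathbf{1}_{p\mid m}$ and $b_l=1$. Every term $e(\alpha(ml)^k)$ then equals $1$, so the bilinear sum is $\asymp X/p=Xq^{-1/k}$. This contradicts the bound $X^{1+\varepsilon}\bigl(Q^{1/2}X^{-9/20}+q^{-1/2}\bigr)$ that you predict for arbitrary bounded coefficients. For example, with $k=8$ and $q=Q=X^{2/5}$ the sum is $X^{19/20}$, against a claimed $X^{4/5+\varepsilon}$.

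Separately, $(k-1)$-fold Weyl differencing saves at most about $M^{-2^{1-k}}$. It saves nothing when $\alpha(l_1^k-l_2^k)$ is close to a rational with small denominator, which is the typical situation here. So it cannot produce the exponent $11/20$ for $k=8$.

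\textbf{The missing idea: remove the rational part of $\alpha=a/q+\beta$ before decomposing $\Lambda$.} Since $q\leq Q\leq X<p$, every prime in the sum is coprime to $q$. Orthogonality of characters then gives
\[
h_k(\alpha,X)=\frac{1}{\varphi(q)}\sum_{\chi \bmod q}C(\chi)\sum_{X<p\leq 2X}\chi(p)e(\beta p^k),\qquad C(\chi)=\sum_{h=1}^{q}\bar\chi(h)e(ah^k/q)\ll q^{1/2+\varepsilon}.
\]
The factor $q^{-1/2}$ is now extracted once and for all, before any decomposition.

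What remains is an average over all characters modulo $q$ of sums twisted only by $e(\beta p^k)$. This is a weight of height $T\asymp 1+X^k|\beta|$, with $qT\ll Q$. After partial summation or a Mellin transform, it becomes a hybrid mean value
\[
\sum_{\chi \bmod q}\int_{|t|\ll T}\Bigl|\sum_{X<p\leq 2X}\chi(p)p^{-it}\Bigr|\,dt .
\]
One bounds this using Heath-Brown's identity together with mean-value and large-value (or zero-density) estimates for Dirichlet polynomials. This averaging over $\chi$ and $t$ supplies the factor $(1+X^k|\beta|)^{-1/2}$ and the $Q^{1/2}X^{11/20+\varepsilon}$ term, losing only a power of $\log X$. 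This is the standard framework for bounds of this shape (Ren, and Kumchev's refinement). A Weyl-sum bilinear argument in the original variables cannot substitute for it.
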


We now estimate the contribution from $\widetilde{\mathfrak m}$.
The following estimate yields the required logarithmic saving.
\begin{lem}\label{pruning}
We have
\[
\int_{\widetilde{\mathfrak m}}|G(\alpha)|\,d\alpha
\ll n^\Gamma L^{-1}.
\]
\end{lem}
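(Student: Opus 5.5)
We split $\widetilde{\mathfrak m}=\mathfrak m_2\cup\mathfrak m_3$ and on each piece factor $G$ so that one exponential sum is bounded pointwise while the rest go into a mean value. The natural pointwise candidate is a prime sum of small degree whose size is controlled by the rational approximation $a/q$ defining the arc. On $\mathfrak m_3$ we have $L^B<q\leq Q_1$, which gives a logarithmic saving from the factor $(q+\cdots)^{-1/2}$. On $\mathfrak m_2$ we have $q\leq L^B$ and $|\alpha-a/q|\geq L^B/n$, which gives a saving from $(1+n|\beta|)^{-1/2}$.

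For the pointwise bound I will use Lemma~\ref{Lemma 5.2} with $k=2$, or more robustly $k=4$ or $k=8$. I split $g_k$ dyadically, removing the $\log p$ weight by partial summation, and take $Q=Q_1=n^{1/25}$, $X\asymp P_k$. The condition $|q\alpha-a|\leq Q_2^{-1}$ must imply $|q\alpha-a|<QX^{-k}$; since $X^k\asymp n$, this reads $Q_2^{-1}\leq Q_1 n^{-1}$, i.e.\ $n^{-24/25}\leq n^{1/25-1}$, which holds with equality, so the arcs are admissible. We also need $Q\leq X$, i.e.\ $n^{1/25}\leq n^{1/k}$, which holds for $k\leq 25$.

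The bound is then
\[
g_k(\alpha)\ll Q_1^{1/2}P_k^{11/20+\varepsilon}+\frac{P_kL^{C+1}q^{\varepsilon}}{(q+n|q\alpha-a|)^{1/2}}.
\]
For $k=2$ the first term is $n^{1/50+11/40+\varepsilon}\ll P_2 n^{-\delta}$. On $\mathfrak m_3$ the second term is $\ll P_2L^{C+1-B/2+\varepsilon}$. On $\mathfrak m_2$ we have $q+n|q\alpha-a|\geq nq|\beta|\geq L^B$, so again $g_2\ll P_2L^{C+1-B/2}$. Thus $\sup_{\widetilde{\mathfrak m}}|g_2|\ll P_2L^{-B/3}$ once $B$ is large.

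It remains to bound the complementary integral
\[
\int_0^1 |g_2 g_3 g_4 g_5 g_6 g_8|\,d\alpha\ll P_2^{-1}n^{\Gamma}L^{c}
\]
with a fixed power $c$. I would do this by H\"older's inequality with the same exponent pattern as in Lemma~\ref{n}, but now keeping the full power of $g_2$ only once. Concretely, I would write $|g_2g_3g_4g_5g_6g_8|$ as a product of powers of the integrands of $K_1$, $K_2$, $K_3$, $J_6$, $J_8$ (Lemmas~\ref{k_i}, \ref{choice J_8}, \ref{choice J_6}) times a trivial bound on $g_3$. This is where the difficulty lies: those lemmas carry $n^{\varepsilon}$ losses, while we need a bound within a power of $L$ of $P_2^{-1}n^\Gamma$.

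To avoid the loss I would instead organise the argument as
\[
\int_{\widetilde{\mathfrak m}}|G|\leq \sup_{\widetilde{\mathfrak m}}|g_2|\cdot\int_0^1|g_2g_3g_4g_5g_6g_8|\,d\alpha,
\]
and bound the integral by Cauchy--Schwarz as
\[
\Bigl(\int_0^1|g_2g_4g_8|^2\Bigr)^{1/2}\Bigl(\int_0^1|g_3g_5g_6|^2\Bigr)^{1/2}.
\]
Both factors count solutions of equations such as $x_1^2+y_1^4+z_1^8=x_2^2+y_2^4+z_2^8$. Each count is asymptotically diagonal and is $\ll$ (number of variables) times $L^{c}$. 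One sees this by Hua-type or divisor arguments: fix the higher powers and count representations of the difference as $x_1^2-x_2^2$, which is $O(\tau)$ off the diagonal. The divisor-function averages are $O(L^{c})$ on average, giving $P_2P_4P_8L^{c}$ and $P_3P_6P_5^{\gamma}L^{c}$ respectively. The second needs the shape $P_3^2\cdot(\text{something})$; by Lemma~\ref{Brudern}-type counting with the exponents $1/3+1/5+1/6<1$, I expect $\ll P_3P_5^{\gamma}P_6L^c$ to hold in the form of an average of a divisor-type count.

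The product is then $\ll (P_2P_3P_4P_5^\gamma P_6P_8)^{1/2}$ times a power of $L$. This is too weak compared with $P_2^{-1}n^\Gamma$, so H\"older with more factors, or using the power saving of $\mathfrak n$, is needed. The natural fix is to split $\widetilde{\mathfrak m}$ into $\widetilde{\mathfrak m}\cap\mathfrak n$, already handled by Lemma~\ref{n}, and $\widetilde{\mathfrak m}\setminus\mathfrak n$. On the latter, $|g_3|>P_3^{1-\eta}$, and Lemma~\ref{cubic} then forces $\alpha$ near $b/r$ with $r\ll n^{2\eta/3+\varepsilon}$ and $|r\alpha-b|\ll r^{-1}\cdots$. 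This places $\alpha$ in a thin set of major-arc type, of measure $\ll n^{-1+4\eta/3+\varepsilon}$ after integrating over $r$ the weight $(r+n|r\alpha-b|)^{-1}$.

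On this set I use $g_3\ll P_3L(r+n|r\alpha-b|)^{-1/2}$ and trivial bounds elsewhere, together with the $L^{-B/3}$ saving in $g_2$. The integral of $(r+n|r\alpha-b|)^{-1}$ gives $n^{-1}$ times a power of $L$ times a small power of $n$. That small power of $n$ must be absorbed, which again means splitting by the size of $r$ and using Lemma~\ref{Lemma 5.2} for several of $g_2,g_4,g_8$ simultaneously, so that their factors $(r+\cdots)^{-1/2}$ multiply to give integrability. I expect this bookkeeping — showing the product of decay factors, with $g_5$ and $g_6$ bounded trivially, is integrable with total weight $\ll n^{\Gamma}L^{-1}$ — to be the main obstacle.
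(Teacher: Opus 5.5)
Your proposal does not reach a proof. The first two routes fail for the reasons you give yourself. Any H\"older or Cauchy--Schwarz step over $[0,1]$ either inherits the $n^{\varepsilon}$ losses of Lemmas~\ref{k_i}, \ref{choice J_8}, \ref{choice J_6}, which an $L^{-B/3}$ saving in one factor cannot repay, or, as your own computation shows, yields only about $n^{(\Gamma+1/2)/2}$ against the required $n^{\Gamma-1/2}$. The third route stops exactly where the proof lives.

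The missing observation is that on $\widetilde{\mathfrak m}$ no mean value is needed at all. Every $\alpha\in\widetilde{\mathfrak m}$ carries a single approximation $a/q$ with $q\le Q_1$ and $|q\alpha-a|\le Q_2^{-1}=Q_1/n$. This one approximation makes Lemma~\ref{Lemma 5.2} (with $Q=Q_1$) applicable to every $g_k$, $k\in\{2,3,4,6,8\}$, simultaneously; you checked the hypotheses yourself. Put $V(\alpha)=(q+n|q\alpha-a|)^{-1}$, so that $q+n|q\alpha-a|\le 2Q_1$. Since $Q_1=n^{1/25}<n^{9/160}\le P_k^{9/20}$, the first term obeys $Q_1^{1/2}P_k^{11/20+\varepsilon}\ll P_kQ_1^{-1/2}\ll P_kV(\alpha)^{1/2}$, and also $q^{\varepsilon}\le V(\alpha)^{-\varepsilon}$. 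Hence $g_k\ll P_kL^{C+1}V(\alpha)^{1/2-\varepsilon}$ for all six factors other than $g_5$. Bounding $g_5\ll P_5^{\gamma}L$ trivially gives $G(\alpha)\ll n^{1+\Gamma}L^{6C+7}V(\alpha)^{5/2}$.

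This pointwise bound is already integrable, because its decay exponent exceeds $2$. Write $\alpha=a/q+\beta$, so that $V(\alpha)=q^{-1}(1+n|\beta|)^{-1}$; the $O(q)$ choices of $a$ contribute a factor $q$, and
\[
\int_{\mathfrak m_3}|G(\alpha)|\,d\alpha\ll n^{1+\Gamma}L^{6C+7}\sum_{q>L^B}q^{-3/2}\int_{\mathbb R}(1+n|\beta|)^{-5/2}\,d\beta\ll n^{\Gamma}L^{6C+7-B/2}.
\]
On $\mathfrak m_2$ the constraint $|\beta|\ge L^B/n$ gives $n^{\Gamma}L^{6C+7-3B/2}$, and a large $B$ finishes. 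That is the paper's entire proof. The ``bookkeeping'' you flagged as the main obstacle is this computation. Neither $\mathfrak n$ nor Lemma~\ref{cubic} enters; they are used only afterwards, to show $[0,1]\setminus(\mathfrak M\cup\mathfrak n)\subseteq\widetilde{\mathfrak m}$.

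Your third route as specified would in fact not close. Lemma~\ref{cubic} gives $g_3\ll P_3^{1+\varepsilon}(r+\cdots)^{-1/2}$, not $P_3L(r+\cdots)^{-1/2}$, and an $n^{\varepsilon}$ loss is fatal when the target is an $L^{-1}$ saving. If you instead discard the decay of $g_3$ while bounding $g_5,g_6$ trivially, only $g_2^2g_4g_8$ decay, giving $V^{2-\varepsilon}$. Then $\sum_{L^B<q\le Q_1}q\cdot q^{-2+\varepsilon}$ produces no saving on $\mathfrak m_3$. You need at least five decaying factors, all taken from Lemma~\ref{Lemma 5.2}.
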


\begin{proof}
For $\alpha\in\widetilde{\mathfrak m}$, put
\[
V(\alpha)=\bigl(q+n|q\alpha-a|\bigr)^{-1}.
\]
Since
\[
q\leq Q_1,\qquad |q\alpha-a|\leq Q_2^{-1},
\qquad Q_1Q_2=n,
\]
we have
\[
q+n|q\alpha-a|\leq 2Q_1,
\qquad
V(\alpha)\gg Q_1^{-1}.
\]
Applying Lemma \ref{Lemma 5.2} with \(X=P_k/2\) and \(Q=Q_1\), followed by
partial summation, we obtain
\[
g_k(\alpha)
\ll
Q_1^{1/2}P_k^{11/20+\varepsilon}
+
P_kL^{C+1}V(\alpha)^{1/2-\varepsilon}
\qquad (k=2,3,4,6,8).
\]
Since $Q_1=n^{1/25}$ and $P_k\geq P_8=n^{1/8}$, we have
\[
Q_1^{1/2}P_k^{11/20+\varepsilon}
\ll
P_kQ_1^{-1/2+\varepsilon}
\ll
P_kL^{C+1}V(\alpha)^{1/2-\varepsilon}.
\]
Consequently,
\[
g_k(\alpha)
\ll
P_kL^{C+1}V(\alpha)^{1/2-\varepsilon}
\qquad (k=2,3,4,6,8).
\]

Together with the trivial estimate
\[
g_5(\alpha)\ll P_5^\gamma L,
\]
this yields
\[
G(\alpha)
\ll
n^{1+\Gamma}L^{6C+7}V(\alpha)^{3-6\varepsilon}
\ll
n^{1+\Gamma}L^{6C+7}V(\alpha)^{5/2}.
\]

Writing
\[
\alpha=\frac aq+\beta,
\qquad
V(\alpha)=q^{-1}(1+n|\beta|)^{-1},
\]
we obtain
\[
\begin{aligned}
\int_{\mathfrak m_2}|G(\alpha)|\,d\alpha
&\ll
n^{1+\Gamma}L^{6C+7}
\sum_{q\leq L^B}q^{-3/2}
\int_{L^B/n}^{\infty}
(1+n\beta)^{-5/2}\,d\beta\\
&\ll
n^\Gamma L^{6C+7-3B/2}.
\end{aligned}
\]
Similarly,
\[
\begin{aligned}
\int_{\mathfrak m_3}|G(\alpha)|\,d\alpha
&\ll
n^{1+\Gamma}L^{6C+7}
\sum_{L^B<q\leq Q_1}q^{-3/2}
\int_0^{1/(qQ_2)}
(1+n\beta)^{-5/2}\,d\beta\\
&\ll
n^\Gamma L^{6C+7-B/2}.
\end{aligned}
\]
Taking $B=12C+18$, we conclude that
\[
\int_{\widetilde{\mathfrak m}}|G(\alpha)|\,d\alpha
\ll n^\Gamma L^{-1}.
\]
\end{proof}

\section{The proof of Theorem 1.1}

We first prove that
\[
\left[0,1\right]\setminus(\mathfrak M\cup\mathfrak n)
\subseteq \widetilde{\mathfrak m}.
\]
Let
\[
\alpha\in \left[0,1\right] \setminus(\mathfrak M\cup\mathfrak n).
\]
Then
\[
|g_3(\alpha)|>P_3^{1-\eta}.
\]
Applying Lemma~\ref{cubic} and partial
summation, we have
\[
g_3(\alpha)
\ll
P_3^{13/14+\varepsilon}
+
\frac{P_3^{1+\varepsilon}}
{\bigl(q+P_3^3|q\alpha-a|\bigr)^{1/2}}.
\]
The first term is smaller than \(P_3^{1-\eta+\varepsilon}\), and hence
the second term must satisfy
\[
\frac{P_3^{1+\varepsilon}}
{\bigl(q+P_3^3|q\alpha-a|\bigr)^{1/2}}
\gg P_3^{1-\eta+\varepsilon}.
\]
Therefore,
\[
q+P_3^3|q\alpha-a|
\ll P_3^{2\eta+\varepsilon},
\]
and consequently
\[
q\ll n^{2\eta/3+\varepsilon},
\qquad
|q\alpha-a|\ll n^{-1+2\eta/3+\varepsilon}.
\]
Since \(\eta=11/200\), for sufficiently small \(\varepsilon>0\) and
sufficiently large \(n\), we obtain
\[
q\leq Q_1,
\qquad
|q\alpha-a|\leq Q_2^{-1}.
\]

If \(L^B<q\leq Q_1\), then \(\alpha\in\mathfrak m_3\). If
\(q\leq L^B\), then \(\alpha\notin\mathfrak M\) gives
\[
\left|\alpha-\frac aq\right|>\frac{L^B}{n},
\]
so that \(\alpha\in\mathfrak m_2\). Hence
\[
\left[0,1\right] \setminus(\mathfrak M\cup\mathfrak n)
\subseteq\widetilde{\mathfrak m}.
\]

Finally,
\[
\begin{aligned}
\left[0,1\right]\setminus\mathfrak M
&=
\bigl((\left[0,1\right]\setminus\mathfrak M)\cap\mathfrak n\bigr)
\cup
\bigl((\left[0,1\right]\setminus\mathfrak M)\setminus\mathfrak n\bigr)\\
&=
\bigl((\left[0,1\right]\setminus\mathfrak M)\cap\mathfrak n\bigr)
\cup
\bigl(\left[0,1\right]\setminus(\mathfrak M\cup\mathfrak n)\bigr)\\
&\subseteq
\mathfrak n\cup\widetilde{\mathfrak m}.
\end{aligned}
\]
Thus, it follows from Lemma \ref{Lemma 3.6}, Lemma \ref{n} and Lemma \ref{pruning} that
\begin{align}\label{5.1}
\int_0^1G(\alpha)e(-\alpha n)d\alpha
    & =\int_{\mathfrak{M}}G(\alpha)e(-\alpha n)d\alpha \notag\\
    &\quad +O\bigg(\int_\mathfrak{n} |G(\alpha)|\,d\alpha\bigg)+O\bigg(\int_{\widetilde{\mathfrak{m}}}|G(\alpha)|d\alpha \bigg) \notag \\
    &\gg  n^{\Gamma}.
\end{align}
Using \eqref{2.1} and \eqref{5.1}, we complete the proof of Theorem \ref{wang}.

\end{document}